\newtheorem{theorem}{Theorem}[section]
\newtheorem{corollary}[theorem]{Corollary}
\newtheorem{proposition}[theorem]{Proposition}
\newtheorem*{cor:stabilizerollspin}{Proposition \ref{cor:stabilizerollspin}}
\newtheorem*{cor:stabilizeinvolution}{Corollary \ref{cor:stabilizeinvolution}}
\newtheorem*{prop:twistturnedtoricovers}{Theorem \ref{prop:twistturnedtoricovers}}
\newtheorem*{theorem:twistturnedtoruscover}{Theorem \ref{theorem:twistturnedtoruscover}}
\newtheorem*{proposition:twistturnedtoruscover}{Proposition \ref{prop:twistturnedtoruscover}}
\newtheorem{question}[theorem]{Question} 
\newtheorem{claim}[theorem]{Claim} 
\theoremstyle{definition}
\newtheorem{definition}[theorem]{Definition}
\newtheorem{remark}[theorem]{Remark}
\newtheorem{remarks}[theorem]{Remarks}
\DeclareMathOperator{\Diff}{Diff}
\newcommand{\pt}{\text{pt}}
\newcommand{\blue}{\textcolor{blue}}
\definecolor{darkgreen}{rgb}{0,.5,0}
\begin{document}

\begin{abstract}
We prove that the double branched cover of a twist-roll spun knot in $S^4$ is smoothly preserved when four twists are added, and that the double branched cover of a twist-roll spun knot connected sum with a trivial projective plane is preserved after two twists are added.
As a consequence, we conclude that the members of a family of homotopy $\mathbb{CP}^2$s recently constructed by Miyazawa are each diffeomorphic to $\mathbb{CP}^2$. We also apply our techniques to show that the double branched covers of odd-twisted turned tori are all diffeomorphic to $S^2 \times S^2$, and show that a family of homotopy 4-spheres constructed by Juh\'asz and Powell are all diffeomorphic to $S^4$.
    \end{abstract}

\title[Branched covers of twist-roll spun knots and turned twisted tori]{Branched covers of twist-roll spun knots and turned twisted tori}

    \author[Mark Hughes]{Mark Hughes}
    \address{Brigham Young University\\Provo, UT, 84602 USA}
    \email{hughes@mathematics.byu.edu}
    
    \author[Seungwon Kim]{Seungwon Kim}
    \address{Sungkyunkwan University\\Suwon, Gyeonggi, 16419 Republic of Korea}
    \email{seungwon.kim@skku.edu}
    
    \author[Maggie Miller]{Maggie Miller}
    \address{University of Texas at Austin\\Austin, TX, 78712 USA}
    \email{maggie.miller.math@gmail.com}
    
 \subjclass{57K45 (primary), 57K40 (secondary)}

 \thanks{MH was supported by a grant from the National Science Foundation (DMS-2213295). SK was supported by National Research Foundation of Korea (NRF) grants funded by the Korea government (MSIT) (No.2022R1C1C2004559). MM was supported by a Clay Research Fellowship as well as NSF grant DMS-2404810 and Simons Foundation Gift MPS-TSM-00007679. The work in this paper took place during visits to all three of BYU, Sungkyunkwan, and UT Austin in Springs 2024 and 2025. MH additionally thanks the Max Planck Institute for Mathematics in Bonn and Dublin Institute for Advanced Studies for hosting him during a portion of the work on this project.} 
\maketitle 

\section{Introduction}

In this paper, we write $\Sigma_2(S)$ to denote the 2-fold cover of $S^4$ branched along a smoothly embedded surface $S$. In the style of Litherland \cite{litherland}, we let $\tau^{m}\rho^{n}(K)$ denote the 2-knot which is the $m$-twist $n$-roll spin of the classical knot $K$. We write $P_{\pm}$ to refer to the smoothly unknotted projective plane of Euler number $\pm 2$ in $S^4$. One should also recall the standard fact that $\Sigma_2(P_\pm)\cong\mp\mathbb{CP}^2$ with covering action given by complex conjugation.

\begin{theorem}\label{thm:main}
For all integers $m,n\in\mathbb{Z}$ and any classical knot $K$, the following diffeomorphisms hold:
\begin{align*}
    \Sigma_{2} (\tau^{m}\rho^{n}(K))&\cong \Sigma_{2} (\tau^{m+4}\rho^{n}(K)),\\\Sigma_{2} ( \tau^{m}\rho^{n}(K)\# P_{\pm})&\cong \Sigma_{2} (\tau^{m+2}\rho^{n}(K)\# P_{\pm}).
\end{align*}

\end{theorem}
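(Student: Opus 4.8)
The plan is to realize each $\Sigma_2(\tau^m\rho^n(K))$ by an explicit cut-and-paste decomposition in which the twist parameter $m$ governs a single regluing, and then to recognize the passage $m\mapsto m+2$ as a Gluck twist along a fixed, canonically embedded $2$-sphere. First I would present $K$ as a knotted arc $t$ in a ball $B^3$, with $\partial t$ the two branch points on $S^2=\partial B^3$, and build the spun surface from the standard decomposition $S^4=(B^3\times S^1)\cup(S^2\times D^2)$, recording the twisting and rolling in the monodromy of the $B^3\times S^1$ piece. Taking double branched covers piece by piece gives $\Sigma_2(\tau^m\rho^n(K))\cong M_\phi\cup_{S^2\times S^1}(S^2\times D^2)$, where $W:=\Sigma_2(B^3,t)$ is the double cover of the ball branched over the arc, $M_\phi$ is its mapping torus with monodromy $\phi=\delta^{m}\circ r^{n}\circ\phi_0$, and $S^2\times D^2$ is the cover of the trivial part. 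Here $\delta$ is the lift to $W$ of the meridional $2\pi$-rotation of $B^3$ about the axis through the two branch points, $r$ is the lift of the roll, and $\phi_0$ is a fixed base monodromy; only $\delta$ depends on $m$.

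The crucial point is a factor of two coming from the branching. On $\partial W\cong S^2$ the covering $\partial W\to\partial B^3$ is the degree-$2$ branched map $z\mapsto z^2$, so a $2\pi$-rotation downstairs lifts to a $\pi$-rotation upstairs; that is, $\delta|_{\partial W}$ is a half-turn. Increasing $m$ by an even number $k$ therefore changes the monodromy near $\partial W$ by a rotation through $k\pi$, which can be pushed across $S^2\times S^1$ and absorbed into a regluing of $S^2\times D^2$ by $(x,\theta)\mapsto(R_{k\theta/2}\,x,\theta)$, where $R_\psi\in SO(3)$ is rotation by $\psi$ about a fixed axis. As $\theta$ runs over $S^1$ this traces the loop $\theta\mapsto R_{k\theta/2}$, representing $k/2\in\mathbb{Z}/2=\pi_1(SO(3))$. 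Thus $m\mapsto m+2$ reglues $S^2\times D^2$ by the generator of $\pi_1(SO(3))$, i.e.\ performs a single Gluck twist along the fixed sphere $S:=S^2\times\{0\}$, while $m\mapsto m+4$ performs two such Gluck twists. Since $\delta$, $r$, and $\phi_0$ are supported in disjoint regions, this description is uniform in the roll parameter $n$.

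Statement one then follows immediately: two Gluck twists along the same sphere reglue $S^2\times D^2$ by the loop $\theta\mapsto R_{2\theta}$, which represents $2\equiv 0$ in $\pi_1(SO(3))=\mathbb{Z}/2$ and hence is null-homotopic. A null-homotopic regluing is isotopic to the identity and extends over the solid handle $S^2\times D^2$, so the total space is unchanged and $\Sigma_2(\tau^{m+4}\rho^n(K))\cong\Sigma_2(\tau^m\rho^n(K))$.

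For statement two I would use that connected sum along the branch surfaces lifts to connected sum of the covers, so $\Sigma_2(\tau^m\rho^n(K)\#P_\pm)\cong\Sigma_2(\tau^m\rho^n(K))\#(\mp\mathbb{CP}^2)$, with the Gluck sphere $S$ arranged to lie in the first summand. Since $m\mapsto m+2$ is a single Gluck twist along $S$, the statement reduces to showing that one Gluck twist becomes trivial once a $\mp\mathbb{CP}^2$ summand is present. This I would establish by tubing $S$ to the exceptional sphere $E$ of odd self-intersection in $\mp\mathbb{CP}^2$: sliding $S$ over $E$ converts the nontrivial $\pi_1(SO(3))$-regluing into an ambient framing change that $E$ realizes by isotopy, undoing the twist. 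I expect this last step to be the main obstacle. The subtlety is that $S$ is null-homologous (each $\Sigma_2(\tau^m\rho^n(K))$ is a homotopy $4$-sphere), so there is no homological dual and the cancellation must be produced geometrically from the specific exceptional sphere supplied by $P_\pm$. Establishing the twist-to-Gluck dictionary of the second paragraph rigorously -- in particular the factor of two and the claim that the monodromy change is supported near $S$ uniformly in $n$ -- is the other place where the real work lies.
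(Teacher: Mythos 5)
Your route is genuinely different from the paper's: you work directly with Litherland's mapping-torus decomposition and locate the entire dependence on $m$ in the regluing of the $S^2\times D^2$ neighborhood of the lifted axis sphere, whereas the paper re-encodes $\tau^m\rho^n(K)$ as a Fintushel--Stern-type torus surgery on an unknotted torus $\widetilde{T}$ linking an unknotted sphere, and realizes $m\mapsto m+4$ (resp.\ $m\mapsto m+2$ inside $\mp\mathbb{CP}^2$) by an explicit ambient isotopy of $\widetilde{T}$ effecting two (resp.\ one) Dehn twists along a curve. Your key observation --- that the degree-two branched covering $\partial W\to \partial B^3$ halves rotation angles, so that $m\mapsto m+2$ becomes a single Gluck twist along the lifted axis sphere $\widetilde{S}$ and $m\mapsto m+4$ becomes a null-homotopic regluing --- is correct and yields a clean proof of the first diffeomorphism, modulo one point you should make explicit: the monodromy change $\phi_{m+k,n}=\phi_{k,0}\circ\phi_{m,n}$ is supported near the torus $\partial\nu(\partial B^3\cup\mathring{K})$, not only near $\partial B^3$, so before pushing it into the regluing you must also absorb the part of the twist carried along the knotted arc. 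This works, but only because the branch locus is used up upstairs: that part lifts to an integer number of full rotations of a tube, a diffeomorphism of a ball that is isotopic to the identity rel boundary. (Also, your parenthetical that each $\Sigma_2(\tau^m\rho^n(K))$ is a homotopy $4$-sphere is false in general --- for $m=n=0$ it has $\pi_1\cong\pi_1(\Sigma_2(S^3,K))$ --- though nothing in the argument uses it.)

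The second diffeomorphism is where your proposal has a genuine gap, exactly where you predicted. Your reduction is fine: one must show that a single Gluck twist along $\widetilde{S}$, performed in $\Sigma_2(\tau^m\rho^n(K))\#(\mp\mathbb{CP}^2)$ with $\widetilde{S}$ disjoint from the $\mp\mathbb{CP}^2$ summand, does not change the diffeomorphism type. But the proposed justification --- tube $\widetilde{S}$ to the exceptional sphere and let the odd self-intersection undo the framing twist --- does not work as stated: tubing produces a sphere of normal Euler number $\pm 1$, which is a different sphere and not one along which a Gluck twist is even defined, and the mere presence of an odd sphere near a given sphere does not trivialize a Gluck twist (the Gluck twist along a fiber of the nontrivial bundle $\mathbb{CP}^2\#\overline{\mathbb{CP}}^2\to S^2$ yields $S^2\times S^2$, hence is nontrivial, even though the fiber has a dual section of square $+1$). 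What you actually need is that the Gluck diffeomorphism of $S^2\times S^1$ extends over $(S^2\times D^2)\#(\mp\mathbb{CP}^2)$, or some equivalent geometric input; this is precisely the content the paper supplies with an explicit ambient isotopy in $\mp\mathbb{CP}^2$ (a slide of a band over the $2$-handle followed by a ``swim'' of the $2$-handle attaching circle through the other band), and it is the one step of the whole argument that genuinely consumes the $P_\pm$ summand. Without a proof of this step the second statement --- and hence the application to Miyazawa's family --- remains open in your approach.
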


\begin{corollary}\label{cor:pretzel}
Let $J$ be the pretzel knot $P(-2,3,7)$. 
The double branched cover $\Sigma_2((\#_k\tau^0\rho^1(J))\#P_+)$ is diffeomorphic to $\overline{\mathbb{CP}}^2$ for any $k\ge 0$.
\end{corollary}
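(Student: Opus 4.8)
The plan is to reduce everything to the single-summand case and then invoke Theorem \ref{thm:main}. The first ingredient is the standard behavior of double branched covers under the connected sum of surfaces: if $A$ and $B$ are closed surfaces in $S^4$, then $\Sigma_2(A\#B)\cong\Sigma_2(A)\#\Sigma_2(B)$, since the piece added in forming the connected sum is the double cover of $B^4$ branched over a trivial disk, which is again $B^4$ (so a single connected sum results). Applying this to $T_k := (\#_k\tau^0\rho^1(J))\#P_+$ and using the recalled fact $\Sigma_2(P_+)\cong\overline{\mathbb{CP}}^2$, I would write
\[
\Sigma_2(T_k)\;\cong\;\bigl(\#_k\,\Sigma_2(\tau^0\rho^1(J))\bigr)\#\overline{\mathbb{CP}}^2.
\]
Writing $\Sigma := \Sigma_2(\tau^0\rho^1(J))$, which is a homotopy $4$-sphere (being the double branched cover of $S^4$ along a $2$-knot built from twist--roll spinning), the corollary becomes the assertion that $(\#_k\Sigma)\#\overline{\mathbb{CP}}^2\cong\overline{\mathbb{CP}}^2$ for every $k\ge 0$.

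The second step is a dissolving induction: if I can establish the single-summand case $\Sigma\#\overline{\mathbb{CP}}^2\cong\overline{\mathbb{CP}}^2$, then $(\#_k\Sigma)\#\overline{\mathbb{CP}}^2\cong(\#_{k-1}\Sigma)\#(\Sigma\#\overline{\mathbb{CP}}^2)\cong(\#_{k-1}\Sigma)\#\overline{\mathbb{CP}}^2$, and the claim follows by induction on $k$ (the base case $k=0$ being trivial). So it suffices to treat $k=1$. Here I reinterpret the connected sum on the manifold side back on the surface side, $\Sigma\#\overline{\mathbb{CP}}^2\cong\Sigma_2(\tau^0\rho^1(J)\#P_+)$, and feed this into the second diffeomorphism of Theorem \ref{thm:main}. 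Iterating the relation $\Sigma_2(\tau^m\rho^1(J)\#P_+)\cong\Sigma_2(\tau^{m+2}\rho^1(J)\#P_+)$ shows that $\Sigma_2(\tau^0\rho^1(J)\#P_+)$ is diffeomorphic to $\Sigma_2(\tau^{2j}\rho^1(J)\#P_+)$ for every integer $j$.

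It therefore remains to identify this common diffeomorphism type with $\overline{\mathbb{CP}}^2$ by exhibiting a single even parameter $2j$ at which the branched cover is manifestly standard, and I expect this base case to be the crux. The cleanest route would be to locate an even $m_0$ for which the twist--roll spun knot $\tau^{m_0}\rho^1(J)$ is the unknotted $2$-sphere, for then $\tau^{m_0}\rho^1(J)\#P_+$ is smoothly isotopic to the unknotted $P_+$ and $\Sigma_2(\tau^{m_0}\rho^1(J)\#P_+)\cong\Sigma_2(P_+)\cong\overline{\mathbb{CP}}^2$; absent such an unknotting statement one can instead produce a handle decomposition of $\Sigma_2(\tau^{m_0}\rho^1(J)\#P_+)$ and recognize it as $\overline{\mathbb{CP}}^2$ by Kirby calculus. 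The essential subtlety is that the summand $P_+$ cannot be dropped: the first diffeomorphism of Theorem \ref{thm:main} gives the bare cover $\Sigma_2(\tau^m\rho^1(J))$ only period $4$ in $m$, so if the distinguished trivial parameter $m_0$ is congruent to $2$ modulo $4$ it is unreachable from $m=0$ without the $P_+$. Indeed $\Sigma$ itself may well be an exotic homotopy sphere, and the argument establishes only that it dissolves after a single connected sum with $\overline{\mathbb{CP}}^2$.
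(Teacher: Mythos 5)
Your reduction is exactly the paper's: split the double branched cover over the connected sum of surfaces into a connected sum of covers, induct on $k$ by absorbing one $\Sigma_2(\tau^0\rho^1(J))$ summand at a time, and for $k=1$ iterate the relation $\Sigma_2(\tau^m\rho^1(J)\#P_+)\cong\Sigma_2(\tau^{m+2}\rho^1(J)\#P_+)$ from Theorem~\ref{thm:main}. Your side remark about why the $P_+$ summand cannot be dropped is also correct and matches the paper's situation: the distinguished trivial parameter turns out to be $18\equiv 2\pmod 4$, which is unreachable from $m=0$ using only the period-$4$ statement.

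However, you stop at precisely the point where the corollary has actual content: you never exhibit the even $m_0$ with $\tau^{m_0}\rho^1(J)$ unknotted, and without it the argument proves nothing beyond ``all the even-twist covers agree with each other.'' This base case is not a formal consequence of anything you set up; it is a specific input about the knot $J=P(-2,3,7)$. The paper supplies it via Teragaito: by Fintushel--Stern, $S^3_{18}(J)$ is a lens space, and by Litherland's fibration theorem the complement of $\tau^{18}\rho^1(J)$ fibers over $S^1$ with fiber the $18$-fold cyclic cover of $S^3_{18}(J)$ punctured, i.e.\ a $3$-ball, so $\tau^{18}\rho^1(J)$ is smoothly unknotted. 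Hence $\Sigma_2(\tau^{0}\rho^1(J)\#P_+)\cong\Sigma_2(\tau^{18}\rho^1(J)\#P_+)\cong S^4\#\overline{\mathbb{CP}}^2\cong\overline{\mathbb{CP}}^2$. Your fallback suggestion of recognizing the cover by Kirby calculus is not carried out and would not be routine; the lens-space-surgery input is the essential missing ingredient, and is also the reason the statement is specific to $P(-2,3,7)$ (and, more generally, to knots admitting lens space surgeries) rather than to arbitrary $K$.
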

\begin{proof}[Proof of Corollary~\ref{cor:pretzel}]
    As observed by Teragaito \cite{teragaito}, the 2-knot $\tau^{18}\rho^1(J)$ is smoothly unknotted. Teragaito's proof is as follows: Litherland \cite{litherland} showed that for $m,n$ nonzero and coprime, the complement of a roll-twist-spun knot $\tau^m\rho^n(K)$ fibers over $S^1$ with closed fiber the $m$-fold cyclic cover of $S^3_{m/n}(K)$. 
    Since $S^3_{18}(J)$ is a lens space \cite{fintushelstern}, Teragaito concludes that the 2-knot $\tau^{18}\rho^1(J)$ has complement fibered by 3-balls and is thus unknotted.
    
    Thus, $\Sigma_2(\tau^{18}\rho^1(J))\cong S^4$. By Theorem \ref{thm:main}, \[\Sigma_2(\tau^{0}\rho^1(J)\# P_+)\cong \Sigma_2(\tau^{18}\rho^1(J)\# P_+)\cong S^4\#\overline{\mathbb{CP}}^2\cong\overline{\mathbb{CP}}^2.\]

    Let $X$ denote $\Sigma_2(\tau^{0}\rho^1(J))$. We have just shown that $X\#\overline{\mathbb{CP}}^2\cong\overline{\mathbb{CP}}^2$. Then for any $k>0$,
    \begin{align*}\Sigma_2((\#_k\tau^{0}\rho^1(J))\# P_+)&=(\#_k X)\#\overline{\mathbb{CP}}^2\\
        &=(\#_{k-1} X)\#(X\#\overline{\mathbb{CP}}^2)\\
        &=(\#_{k-1} X)\#\overline{\mathbb{CP}}^2\\
        &=\hspace{.3in}\vdots\\
        &=\overline{\mathbb{CP}}^2.\qedhere
    \end{align*}
\end{proof}

\begin{remarks}\leavevmode
\begin{enumerate}
    \item This argument shows that
    $\Sigma_2((\#_k\tau^m\rho^1(J))\# P_{\pm})\cong \mp\mathbb{CP}^2$ for all even $m$. Teragaito \cite{teragaito} also notes that $\tau^{19}\rho^1(J)$ is smoothly unknotted, so we conclude $\Sigma_2((\#_k\tau^m\rho^1(J))\# P_{\pm})\cong \mp\mathbb{CP}^2$ for all $m$ regardless of parity.
    \item More generally, if $S^3_p(K)$ is a lens space for some knot $K$ and an integer $p$, then $\Sigma_2((\#_k\tau^m\rho^1(K))\# P_{\pm})\cong \mp\mathbb{CP}^2$ for all $m\equiv p\pmod{2}$ and $\Sigma_2((\#_k\tau^m\rho^1(K)))\cong S^4$ for all $m\equiv p\pmod{4}$.
\end{enumerate}
\end{remarks}

While we trivialize the homotopy $\mathbb{CP}^2$s constructed in \cite{miyazawa}, we are not able to conclude that the associated homotopy $S^4$s are standard. The following is a subquestion of \cite[Theorem 1.7]{miyazawa}.
\begin{question}
    Is there a diffeomorphism $\Sigma_2(\tau^0\rho^1(J))\cong S^4?$
\end{question}

In \cite[Theorem 4.45]{miyazawa}, Miyazawa shows either there is an infinite family of exotic $\overline{\mathbb{CP}}^2$s or an infinite exotic family of involutions of $\overline{\mathbb{CP}}^2$. The potential family of exotic $\overline{\mathbb{CP}}^2$s in his proof is precisely the family of covering involutions in Corollary \ref{cor:pretzel}. Thus, given Miyazawa's result, we obtain that there is an infinite exotic family of involutions on $\overline{\mathbb{CP}}^2$. To make this paper as explicit as possible (at the cost of perhaps being redundant), we elaborate on this consequence in the following corollary, which follows immediately from Corollary \ref{cor:pretzel} given Miyazawa's recent preprint \cite{miyazawa}. 

\begin{corollary}\label{cor:involution}
There exists an infinite exotic family of involutions $\{\iota_k\}_{k\ge 0}$ of $\overline{\mathbb{CP}}^2$.
\end{corollary}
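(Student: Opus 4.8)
The plan is to combine Corollary~\ref{cor:pretzel} with the dichotomy of Miyazawa \cite[Theorem 4.45]{miyazawa}, so that the argument is almost entirely bookkeeping once the underlying manifolds have been pinned down. First I would set up the family explicitly. For each $k\ge 0$ the branch surface $(\#_k\tau^0\rho^1(J))\# P_+$ is an embedded $\mathbb{RP}^2\subset S^4$ of Euler number $+2$, and the associated deck transformation $\iota_k$ of $\Sigma_2((\#_k\tau^0\rho^1(J))\# P_+)$ is a smooth involution whose fixed-point set is exactly this $\mathbb{RP}^2$. By Corollary~\ref{cor:pretzel} the total space is diffeomorphic to $\overline{\mathbb{CP}}^2$ for every $k$, so each $\iota_k$ is legitimately an involution of $\overline{\mathbb{CP}}^2$; fixing such diffeomorphisms lets me regard the whole family $\{\iota_k\}_{k\ge 0}$ as acting on a single copy of $\overline{\mathbb{CP}}^2$.

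Next I would invoke Miyazawa's result. His Theorem 4.45 asserts that the family arising from his construction --- which, as noted just before the statement, is precisely the family of covering involutions appearing in Corollary~\ref{cor:pretzel} --- is distinguished by an invariant forcing one of two conclusions: either the underlying smooth four-manifolds comprise an infinite collection of pairwise nondiffeomorphic exotic $\overline{\mathbb{CP}}^2$s, or the involutions themselves form an infinite exotic family. Because Corollary~\ref{cor:pretzel} shows that every member of the family is the standard $\overline{\mathbb{CP}}^2$, the first alternative cannot hold, and the dichotomy then delivers the second alternative directly, which is exactly the assertion of the corollary.

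The only point requiring care --- and the closest thing to an obstacle --- is confirming that ``exotic'' is being used in the intended sense, namely that the $\iota_k$ are pairwise topologically conjugate while being pairwise smoothly inequivalent (not conjugate by any diffeomorphism of $\overline{\mathbb{CP}}^2$). The topological conjugacy is already built into Miyazawa's framework: topologically all of these involutions agree with the standard conjugation-type involution on $\overline{\mathbb{CP}}^2$ fixing an unknotted $\mathbb{RP}^2$, so the entire content lies in the smooth inequivalence supplied by his invariant. Consequently I expect no new geometric input beyond Corollary~\ref{cor:pretzel}: the proof amounts to recording the correspondence between our family and Miyazawa's, eliminating his first case, and reading off his second.
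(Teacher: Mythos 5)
Your proposal is correct and follows essentially the same route as the paper: identify $\iota_k$ as the covering involution of $\Sigma_2((\#_k\tau^0\rho^1(J))\# P_+)$, cite Miyazawa's result that the pairs $(Z_i,\iota_i)$ are equivariantly homeomorphic but not equivariantly diffeomorphic (equivalently, his dichotomy between exotic $\overline{\mathbb{CP}}^2$s and exotic involutions), and use Corollary~\ref{cor:pretzel} to rule out the first alternative. Your closing caveat about the meaning of ``exotic'' matches the paper's own remark that these involutions are inequivalent as involutions (not smoothly diffeotopic) rather than as maps.
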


Note that these involutions are exotic as involutions -- they are not smoothly diffeotopic and hence are not smoothly isotopic {\emph{through involutions}}. This is distinct from obstructing isotopy of maps. In principle, the involutions constructed by Miyazawa might be smoothly isotopic as smooth functions.

\begin{proof}[Proof of Corollary \ref{cor:involution}]
    Let $J$ again denote the pretzel knot $P(-2,3,7)$ and for all $k\ge 0$ set $Z_k$ to be the double branched cover $\Sigma_2((\#_k\tau^{0}\rho^1(J))\# P_+)$ with covering action $\iota_k:Z_k\to Z_k$. 
    Miyazawa \cite{miyazawa} showed that for $i\neq j$, there is an equivariant homeomorphism but no equivariant diffeomorphism from $(Z_i,\iota_i)$ to $(Z_j,\iota_j)$ and therefore, either there exists an exotic family of $\overline{\mathbb{CP}}^2$s or an exotic family of involutions on $\overline{\mathbb{CP}}^2$ that are all topologically equivalent to complex conjugation $\iota_0$. Theorem 4.45 of \cite{miyazawa} explicitly asks which of these theorems hold.
    
    As Corollary \ref{cor:pretzel} shows that $Z_k\cong\overline{\mathbb{CP}}^2$ for all $k$, we find that $\{\iota_k\}$ is an infinite family of pairwise topologically but not smoothly equivalent involutions on ${\overline{\mathbb{CP}}^2}$. 
\end{proof}

    The existence of an infinite exotic family of involutions of $\overline{\mathbb{CP}}^2$ clearly implies the existence of an infinite exotic family of involutions of $\mathbb{CP}^2$; we chose to write $\overline{\mathbb{CP}}^2$ to follow the orientation conventions during the proof of the main theorems in \cite{miyazawa}.

Miyazawa \cite{miyazawa} shows that his obstruction to the particular involutions of Corollary \ref{cor:involution} on $\overline{\mathbb{CP}}^2$ being smoothly equivalent vanishes after equivariantly connect-summing with $\mathbb{CP}^2$. We show that these involutions become smoothly standard after equivariant connect-summing with $\mathbb{CP}^2\#\overline{\mathbb{CP}}^2$.

\begin{proposition}\label{cor:stabilizerollspin}
     For any $k$, the surface $(\#_k\tau^0\rho^1(J))\#P_+\#T$ is smoothly isotopic to $P_+\# P_+\# P_-$, where $T$ is the unknotted torus.
\end{proposition}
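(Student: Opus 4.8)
The target surface $P_+\#P_+\#P_-$ is the standard unknotted non-orientable surface of genus $3$ and normal Euler number $2+2-2=2$. Since the unknotted stabilization $P_+\#T$ is likewise non-orientable of genus $3$ and Euler number $2+0=2$, the two are smoothly isotopic by the classification of unknotted non-orientable surfaces in $S^4$ (these are determined by genus and normal Euler number within the admissible range). Thus it suffices to prove that $(\#_k\tau^0\rho^1(J))\#P_+\#T$ is itself \emph{unknotted}, i.e.\ smoothly isotopic to $P_+\#T$. The plan is to prove this by induction on $k$, peeling off one spun summand at a time.

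The key is to promote the second diffeomorphism of Theorem~\ref{thm:main} to a genuine isotopy of branch surfaces at the cost of a single torus stabilization. The main step I would establish is a \emph{stabilized twist move}: for any classical knot $K$ and any integer $m$, the surface $\tau^m\rho^1(K)\#P_+\#T$ is smoothly isotopic to $\tau^{m+2}\rho^1(K)\#P_+\#T$, by an isotopy supported near the twisting region of the spun summand together with the handle of $T$. The crosscap of $P_+$ supplies the parity needed to realize a change of two twists (exactly as in the $P_\pm$-stabilized line of Theorem~\ref{thm:main}), while the handle of $T$ supplies the extra room in which the twisting annulus can be dragged across itself and unwound. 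The essential feature to verify is that after this isotopy the torus summand is returned to a standardly embedded, unknotted, unlinked torus, so that it can be \emph{reused}: a single $T$ then lets me adjust the twist parameter of whichever summand I am working on by any even amount.

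Granting the stabilized twist move, the induction would run as follows. Sliding $T$ (and $P_+$) adjacent to the first spun summand and applying the move nine times, I isotope $\tau^0\rho^1(J)$ to $\tau^{18}\rho^1(J)$ while leaving the remaining summands, $P_+$, and the restored torus $T$ unchanged. By Teragaito's observation via Litherland's fibration result \cite{litherland,teragaito}, the $2$-knot $\tau^{18}\rho^1(J)$ is smoothly unknotted, so this summand is a trivially embedded $2$-sphere; it may then be isotoped into a small ball and deleted from the connect sum without changing the isotopy type. This yields a smooth isotopy $(\#_k\tau^0\rho^1(J))\#P_+\#T\cong(\#_{k-1}\tau^0\rho^1(J))\#P_+\#T$, and iterating $k$ times leaves exactly $P_+\#T\cong P_+\#P_+\#P_-$, completing the induction.

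The main obstacle is the stabilized twist move, and within it the reusability claim. At the level of double branched covers, adding two twists in the presence of $P_\pm$ is precisely the content of Theorem~\ref{thm:main}; the work here is to realize that covering-space diffeomorphism equivariantly and upstairs as an explicit ambient isotopy of the branch surface, and then to check by a careful movie-move and handle-slide analysis that the torus handle emerges unknotted and split from the remainder of the surface at the end of each move. If instead an application of the move were to knot or link $T$ with the spun summands, the induction would break down, so verifying that the single torus is geometrically recovered after each twist change is the crux of the argument.
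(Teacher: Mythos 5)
Your overall architecture---a ``stabilized twist move'' changing the twist parameter by $2$ in the presence of $P_+\#T$, followed by Teragaito's unknotting of $\tau^{18}\rho^1(J)$ and an induction peeling off one summand at a time---is exactly the paper's strategy. But you leave the stabilized twist move, which you yourself call the main obstacle, unproven, and the route you propose for it is not viable. One cannot in general promote a diffeomorphism of double branched covers to an equivariant diffeomorphism, let alone to an ambient isotopy of the branch surfaces; the failure of precisely this promotion is the whole point of Miyazawa's examples and of Corollary~\ref{cor:involution}. Theorem~\ref{thm:main} lives downstairs in the covers and gives no information about isotopy of branch loci, so ``realizing that covering-space diffeomorphism equivariantly and upstairs'' is the wrong starting point.

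The two missing ideas both work entirely in $S^4$, with no reference to branched covers. First (Satoh's tube trick): in Litherland's description of $\tau^{m+2}\rho^n(K)$ as $\bigl((B^3,\mathring{K})\times I\bigr)/\phi_{m,n}$ capped off with $(S^2\times D^2,\partial\mathring{K}\times D^2)$, push the two extra meridional twists into a sub-cylinder over $B^3\times[1/4,3/4]$ and add a tube along an explicit arc $\alpha$ there, so that the tubed surface meets $B^3\times[1/4,3/4]$ in a disk $D$ together with a cylinder $K\times\gamma$ with $\gamma$ winding twice around a meridian of $D$. Since $\pi_1$ of the complement of $D\#P_\pm$ is $\mathbb{Z}/2\mathbb{Z}$, the two windings can be undone, giving $(\tau^{m+2}\rho^n(K)\#P_\pm)_\alpha\cong(\tau^{m}\rho^n(K)\#P_\pm)_\beta$ for some arc $\beta$. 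Second, your ``reusability'' worry is dissolved not by a movie-move analysis but by the standard fact that when a connected surface has complement with cyclic fundamental group generated by the meridian, all tubing arcs are isotopic; combined with the $P_+$ summand (which makes the two possible framings of a tube give isotopic surfaces), \emph{every} tube addition yields $\#T$. Hence both $(\cdot)_\alpha$ and $(\cdot)_\beta$ above equal $(\cdot)\#T$, the torus is automatically recovered in standard position, and the stabilized twist move follows. With that lemma supplied, your induction matches the paper's and goes through.
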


\begin{corollary}\label{cor:stabilizeinvolution}
    Let $F:S^2\times S^2\to S^2\times S^2$ be the involution that is the covering action of the 2-fold cover $S^2\times S^2\to S^4$ with branch set the unknotted torus $T$. Let $\{\iota_k\}_{k\in\mathbb{N}}$ be the exotic family of involutions of $\overline{\mathbb{CP}}^2$ constructed in Corollary \ref{cor:involution} and let $\iota_{\pm}:\pm\mathbb{CP}^2\to\pm\mathbb{CP}^2$ denote complex conjugation. Then for every $k$, the involution $\iota_k\# F$ on $\overline{\mathbb{CP}}^2\# (S^2\times S^2)\cong \mathbb{CP}^2\#\overline{\mathbb{CP}}^2\#\overline{\mathbb{CP}}^2$ is smoothly equivalent to $\iota_+\#\iota_-\#\iota_-$.
\end{corollary}

We remark that the techniques of Theorem \ref{thm:main} can be applied to spun {\emph{tori}} as well as spun knots.  Motivated by the conventions of Satoh \cite{satoh}, we will refer to the \emph{$m$-twist spun torus} of a classical knot $K$ as $\sigma^m(K)$, and the \emph{turned $m$-twisted spun torus} of $K$ as $\sigma^m_T(K)$. See Boyle \cite{boyle} for the construction and more discussion of twisted and turned spun tori in $S^4$.

The torus $\sigma^m(K)$ is obtained from $\tau^m\rho^0(K)$ by attaching a single tube. Juh\'{a}sz--Powell \cite{juhaszpowell} additionally showed that $\sigma^{\pm 1}_T(K)$ is topologically unknotted, answering a question of Boyle \cite{boyle} in the topological category.

\begin{question}\label{question:turnedtorus}
    Is $\sigma^{\pm 1}_T(K)$ smoothly unknotted?
\end{question}

We do not answer Question \ref{question:turnedtorus} in this paper, but we do prove the following related results.

\begin{theorem}\label{theorem:twistturnedtoruscover}
    For any classical knot $K$ and any integer $m$, $\Sigma_2(\sigma^{2m+1}_T(K))\cong S^2\times S^2$. In particular, $\Sigma_2(\sigma^{\pm 1}_T(K))\cong S^2\times S^2\cong\Sigma_2(T)$, where $T$ is the unknotted torus in $S^4$.
\end{theorem}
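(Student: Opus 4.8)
The plan is to combine the twist--invariance machinery behind Theorem~\ref{thm:main} with the classical fact that the $1$-twist spin of any classical knot is the unknotted $2$-sphere. The key structural observation is the one recorded just above: the torus $\sigma^m_T(K)$ is the twist-spun sphere $\tau^m\rho^0(K)$ with a (turned) tube attached, and in the double branched cover this tube should contribute an embedded $2$-sphere that plays exactly the role the projective plane $P_{\pm}$ plays in the second line of Theorem~\ref{thm:main}. Concretely, I would first build a handle decomposition of $\Sigma_2(\sigma^m_T(K))$ using the same branched-cover handle calculus developed for Theorem~\ref{thm:main}, tracking the twisting region as a framing/clasp modification and the turned tube as an extra handle pair whose belt sphere is geometrically dual to the sphere that carries the twisting.

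With that diagram in hand, the first step is to establish a ``$+2$'' twist invariance, $\Sigma_2(\sigma^m_T(K)) \cong \Sigma_2(\sigma^{m+2}_T(K))$ for every $m$ and every $K$, mirroring the $P_{\pm}$ line of Theorem~\ref{thm:main}. The expectation is that the two-twist modification of the branch locus lifts to a modification of the cover that can be slid across the dual sphere supplied by the tube and thereby undone, whereas without such a sphere one only obtains the weaker ``$+4$'' invariance. Applying this repeatedly reduces any odd twist number $2m+1$ to the base case of twist number $\pm 1$, \emph{for the same knot $K$}; note that, in contrast to the sphere case of Corollary~\ref{cor:pretzel}, we neither can nor need to simplify $K$ itself.

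For the base case I would use that $\tau^{\pm 1}(K)$ is the unknotted $2$-sphere for every classical knot $K$, so that the \emph{non}-turned torus $\sigma^{\pm 1}(K)$ is the standard unknotted torus $T$ and hence $\Sigma_2(\sigma^{\pm 1}(K)) \cong \Sigma_2(T) \cong S^2\times S^2$. It then remains to see that turning the tube does not change the double branched cover. The turn is a Gluck-type reglueing of a neighborhood of the tube, which should lift in $\Sigma_2$ to a genuine Gluck twist along an embedded $2$-sphere; because this sphere admits a geometric dual in $S^2\times S^2$ (coming from the product structure, i.e.\ the dual handle in the diagram above), the Gluck twist is trivial and we recover $S^2\times S^2$. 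This is the smooth upgrade of the topological unknottedness of $\sigma^{\pm 1}_T(K)$ established by Juh\'asz--Powell.

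The main obstacle I anticipate is precisely the identification of the turn with a Gluck twist along a sphere that possesses a geometric dual, together with the verification that the tube's lift genuinely supplies the dual spheres needed both here and for the ``$+2$'' invariance. Getting the framings and the clasp/twist conventions to line up in the branched-cover handle diagram --- so that the two-twist move really is absorbed by sliding over the dual sphere, rather than leaving behind an unexpected change of intersection form --- is the delicate bookkeeping at the heart of the argument. Once it is in place, everything reduces to the standard facts that a Gluck twist along a $2$-sphere with a geometric dual yields a diffeomorphic manifold and that $\Sigma_2(T)\cong S^2\times S^2$.
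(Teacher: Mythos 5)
Your overall strategy---reduce an odd twist parameter to $\pm1$ and then use the unknottedness of the $1$-twist spin---matches the shape of the paper's argument, but both of the steps you propose to carry it out have genuine gaps. The first is the ``$+2$'' invariance $\Sigma_2(\sigma^m_T(K))\cong\Sigma_2(\sigma^{m+2}_T(K))$. In the paper, the only place a two-twist move is realized by an ambient move is Claim~\ref{claim:incp2}, and there it is essential that the ambient manifold is $\mp\mathbb{CP}^2$: the swim through the $\mp1$-framed $2$-handle, i.e.\ a sphere of \emph{odd} square, is what absorbs the framing change. The sphere supplied by the tube lives in $S^2\times S^2$, whose intersection form is even, so the obstruction discussed at the end of Section~\ref{sec:thm:main} survives verbatim: any diffeomorphism preserving $\widetilde{T}$, $\widetilde{C}_1$, and $\mu(\widetilde{T})$ while sending $\gamma_m$ to $\gamma_{m+2}$ must send a Rokhlin-invariant-zero dual curve to a Rokhlin-invariant-one dual curve, and since the null-homologous torus is characteristic in the spin manifold $S^2\times S^2$ the Guillou--Marin form still forbids this. (Your claim for \emph{all} $m$ is also stronger than what is true for even $m$.) The paper does not prove any such invariance for turned tori; instead the needed $+2$ shift is produced by the \emph{turn itself}: Theorem~\ref{prop:twistturnedtoricovers} shows $\Sigma_2(\sigma^m_T(K))\cong\Sigma_2(\tau^{m+2}\rho^0(K))\#(S^2\times S^2)$, because the turned branch torus $R'$ is centered on a Rokhlin-invariant-one curve, so that the lift of $C_2$ is $\widetilde{C}_1+2\widetilde{C}_3$ rather than two copies of a Rokhlin-zero curve. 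After that, only the unconditional $+4$ periodicity of Theorem~\ref{thm:main} is needed to reach $\tau^{\pm1}\rho^0(K)$.

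The base case as you state it also rests on a false principle. A Gluck twist along a sphere with a geometric dual need not be trivial: the Gluck twist on $S^2\times\{\pt\}\subset S^2\times S^2$, whose dual $\{\pt\}\times S^2$ has square zero, produces $S^2\,\widetilde\times\, S^2\cong\mathbb{CP}^2\#\overline{\mathbb{CP}}^2$, which is not diffeomorphic to $S^2\times S^2$. Triviality requires a dual sphere of \emph{odd} self-intersection, and $S^2\times S^2$ contains none. Moreover, the identification of the turn with a Gluck twist along an embedded sphere upstairs is itself not automatic: the turn is a Gluck twist along the axis sphere $S$, which meets the branch surface in two points, so what acts on the cover is the branched lift of the rotation regluing of $\nu(S)$---a half-rotation phenomenon---and not a Gluck twist of $\nu(\widetilde{S})$. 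The paper sidesteps both issues by encoding the turn entirely downstairs (Proposition~\ref{claim:withturn}) as a change of the branch torus from $R$ to $R'$ together with a shift of the surgery curve, and then computing the lifted surgery data directly. I would encourage you to adopt that route: prove the splitting $\Sigma_2(\sigma^m_T(K))\cong\Sigma_2(\tau^{m+2}\rho^0(K))\#(S^2\times S^2)$ first, and let Zeeman plus the $+4$ periodicity finish the odd case.
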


\begin{proposition}\label{prop:twistturnedtoruscover}
    The homotopy 4-sphere $X$ constructed by Juh\'asz--Powell  \cite[Proposition 8.1]{juhaszpowell} is diffeomorphic to $S^4$.
\end{proposition}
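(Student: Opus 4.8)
The plan is to identify the Juh\'asz--Powell homotopy 4-sphere $X$ as a double branched cover and then apply the machinery already developed in the paper. First I would examine the construction in \cite[Proposition 8.1]{juhaszpowell} to see that $X$ arises as $\Sigma_2(\sigma^{\pm 1}_T(K))$ for an appropriate classical knot $K$ (or some closely related branched-cover description). The key point is that Juh\'asz--Powell's $X$ should be realized precisely as the double branched cover of a turned $1$-twisted spun torus, so that Theorem~\ref{theorem:twistturnedtoruscover} becomes directly applicable.

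Granting this identification, the heart of the argument is Theorem~\ref{theorem:twistturnedtoruscover}: since $\sigma^{\pm 1}_T(K)$ is a turned odd-twisted spun torus, we have $\Sigma_2(\sigma^{\pm 1}_T(K))\cong S^2\times S^2$. The subtlety is that Juh\'asz--Powell assert $X$ is a homotopy \emph{4-sphere}, not a copy of $S^2\times S^2$, so $X$ cannot simply equal the full double branched cover. I expect instead that $X$ is built from the branched cover via a surgery or a handle cancellation --- for instance, $X$ may be obtained by removing a neighborhood of the branch locus (or of a distinguished sphere) and regluing, or by an analogous Gluck-type or Price-type twist construction whose output is presented by Juh\'asz--Powell as a homotopy sphere. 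I would therefore track exactly how $X$ sits inside or is cut out of $\Sigma_2(\sigma^{\pm 1}_T(K))$, and verify that the diffeomorphism $\Sigma_2(\sigma^{\pm 1}_T(K))\cong S^2\times S^2$ supplied by our techniques respects this extra structure.

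Concretely, I would carry out the steps in the following order. First, recall the explicit handle or Kirby-diagram description of $X$ from \cite{juhaszpowell} and match its branch data to that of $\sigma^{\pm 1}_T(K)$. Second, invoke Theorem~\ref{theorem:twistturnedtoruscover} to trivialize the ambient branched cover as $S^2\times S^2$. Third, trace the auxiliary construction (the removal/regluing or twist that distinguishes a homotopy $S^4$ from the $S^2\times S^2$ cover) through our explicit diffeomorphism, showing that under it the construction becomes the standard one on $S^2\times S^2$ and hence yields the standard $S^4$. If the paper's proof of Theorem~\ref{theorem:twistturnedtoruscover} is constructive --- producing an explicit sequence of handle moves or isotopies rather than an abstract diffeomorphism --- then this tracing should be routine, since one simply applies the same moves in the presence of the extra handle.

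The main obstacle will be the third step: ensuring that the explicit diffeomorphism to $S^2\times S^2$ is compatible with whatever extra structure Juh\'asz--Powell use to extract a homotopy 4-sphere. An abstract diffeomorphism $\Sigma_2(\sigma^{\pm 1}_T(K))\cong S^2\times S^2$ does not automatically carry a distinguished surgery sphere or a Gluck region to the standard one, so I would need the diffeomorphism of Theorem~\ref{theorem:twistturnedtoruscover} to be sufficiently canonical --- ideally equivariant or respecting the relevant tube/handle --- for the argument to go through. If it is not, the fallback is to reprove the trivialization in the presence of the extra handle directly, running the same twist-addition and handle-cancellation techniques used for Theorem~\ref{thm:main} and Theorem~\ref{theorem:twistturnedtoruscover} on the modified diagram for $X$, and checking that the resulting 4-manifold is simply connected with trivial intersection form, whence $X\cong S^4$ by Freedman together with the smooth structure produced by the explicit handle cancellations.
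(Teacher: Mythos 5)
There is a genuine gap here. Your central reduction is to trivialize the ambient branched cover via Theorem~\ref{theorem:twistturnedtoruscover} and then ``trace the auxiliary construction through the diffeomorphism,'' but this cannot establish the proposition: Juh\'asz--Powell produce $X$ as a connected summand, $\Sigma_2(\sigma^{1}_T(K))\cong (S^2\times S^2)\# X$, so Theorem~\ref{theorem:twistturnedtoruscover} only yields $(S^2\times S^2)\# X\cong S^2\times S^2$. That is precisely the statement ``$X$ is standard after one $S^2\times S^2$-stabilization,'' which is not known to imply $X\cong S^4$ --- indeed, ruling out such stably-standard homotopy spheres is the whole difficulty. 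You do flag this obstacle, but neither proposed fix closes it: demanding that the diffeomorphism to $S^2\times S^2$ carry the distinguished spheres to standard ones is a statement about unknotting spheres in $S^2\times S^2$ that you have no tools for, and the fallback via Freedman only gives a homeomorphism $X\cong S^4$, which is already known and is not the content of the proposition.

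The missing idea in the paper's proof is to \emph{localize before surgering} rather than trivialize globally and trace back. One takes the double branched cover over $R'$ first (with no knot surgery yet), so the ambient manifold is honestly $S^2\times S^2$ containing two transverse spheres $\widetilde{S},\widetilde{S}'$ (the lifts of the spinning axis $S$ and of the disk $D$ of \cite{juhaszpowell}) whose complement $B$ is a $4$-ball; $X$ is obtained by performing the lifted surgery of \eqref{eq:claimwithturn} on $\widetilde{T}$ \emph{inside} $B$ and capping off. The key observation is that $\widetilde{T}$ bounds a solid torus disjoint from $\widetilde{S}\cup\widetilde{S}'$, hence is unknotted already in $B$, and the relevant lifted curves ($\widetilde{C}_1$ and the lifts of $C_2$) all have Rokhlin invariant zero there. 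Comparing with the surgery description \eqref{eq:covergluing}, the capped-off result is exactly $\Sigma_2(\tau^1\rho^0(K))$, which is $S^4$ because $\tau^1\rho^0(K)$ is unknotted. So the correct route identifies $X$ itself --- not its stabilization --- as the double branched cover of a $1$-twist spun knot; your proposal never isolates $X$ from the $S^2\times S^2$ summand, which is where the argument would fail.
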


We prove Proposition \ref{prop:twistturnedtoruscover} in Section~\ref{sec:torusthm}. 
Theorem \ref{theorem:twistturnedtoruscover} is a consequence of the following theorem.

\begin{theorem}\label{prop:twistturnedtoricovers}
    For any knot $K$, 
    \begin{align*}
\Sigma_2(\sigma^m(K))&\cong\Sigma_2(\tau^m\rho^0(K))\# (S^2\times S^2)\\
\Sigma_2(\sigma^{m}_T(K))&\cong\Sigma_2(\tau^{m\pm 2}\rho^0(K))\# (S^2\times S^2).
    \end{align*}
\end{theorem}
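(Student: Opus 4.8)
The plan is to deduce both diffeomorphisms from a single \emph{tube lemma}: if a surface $F'\subset S^4$ is obtained from $F$ by attaching a trivially embedded, evenly framed tube, then $\Sigma_2(F')\cong\Sigma_2(F)\#(S^2\times S^2)$. Granting this, the first equation is immediate from the fact recalled above (from Satoh and Boyle) that $\sigma^m(K)$ is $\tau^m\rho^0(K)$ with a single trivial tube attached, once I check that the spinning tube is untwisted. The entire content of the theorem then reduces to (i) establishing the tube lemma and (ii) identifying the turned spun torus $\sigma^m_T(K)$ with ``$\tau^{m\pm 2}\rho^0(K)$ plus a trivial tube.''

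To prove the tube lemma I would isotope the tube into a standard $4$-ball $B$ so that the pair $(B,F'\cap B)$ is the standard local model of an unknotted disk with an untwisted $1$-handle, while $(B,F\cap B)$ is simply an unknotted disk. Naturality of the branched double cover under the splitting $S^4=(S^4\setminus B)\cup B$ gives $\Sigma_2(F')\cong \Sigma_2\bigl(S^4\setminus B,\,F\setminus B\bigr)\cup \Sigma_2\bigl(B,\,F'\cap B\bigr)$. Outside $B$ the branch locus is that of $F$ lying over a trivial disk, so the first piece is $\Sigma_2(F)$ with an open ball removed; the model computation for the second piece identifies $\Sigma_2(B,\text{tubed disk})$ with $S^2\times S^2$ minus an open ball. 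The cleanest way to pin down the model is the base case already quoted in the excerpt: the unknotted torus $T$ is the unknotted sphere with a trivial tube, and $\Sigma_2(T)\cong S^2\times S^2$, which forces the local contribution. Regluing the two pieces along $S^3$ yields the connected sum. I would also record the mod-$2$ refinement needed below: inserting a full twist into the tube leaves the relevant framing even, so the branched cover still acquires $S^2\times S^2$ rather than the twisted $S^2$-bundle over $S^2$ (equivalently $\mathbb{CP}^2\#\overline{\mathbb{CP}}^2$).

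The heart of the argument, and the step I expect to be the main obstacle, is the identification in the turned case. Here I would work directly from the definition of turning: passing from $\sigma^m(K)$ to $\sigma^m_T(K)$ rolls the spinning tube through a full revolution. The claim to establish is that this roll can be absorbed by re-coordinatizing the spin, at the cost of shifting the twist parameter of the underlying spun sphere by $\pm 2$, while the roll alters the tube's framing by a full twist only. By the mod-$2$ refinement of the tube lemma this full twist does not change the resulting stabilization, so $\sigma^m_T(K)$ is isotopic to $\tau^{m\pm 2}\rho^0(K)$ with an evenly framed trivial tube attached, and the tube lemma delivers the second diffeomorphism. Making the ``$\pm 2$'' precise — tracking exactly how one turn of the tube trades against twisting of the base sphere, confirming that the tube remains unknotted in the complement, and verifying that its framing stays even (so that $S^2\times S^2$ and not $\mathbb{CP}^2\#\overline{\mathbb{CP}}^2$ appears) — is the delicate point. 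I would pin this down by writing an explicit motion picture, i.e.\ a loop of knot diagrams, for each of the two surfaces and comparing the two movies move-by-move.
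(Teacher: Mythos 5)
Your strategy hinges on the claim that the tube converting $\tau^m\rho^0(K)$ into $\sigma^m(K)$ (and likewise in the turned case) is \emph{trivially embedded}, i.e.\ can be isotoped into a $4$-ball $B$ meeting the spun sphere in a boundary-parallel disk. That is the gap, and it is not a technicality. The guiding arc of this tube runs through the polar region $S^2\times D^2$ of the spinning construction, and closing it up with an arc in the spun sphere produces a loop representing a \emph{longitude} of $K$ in $\pi_1(S^4\setminus\tau^m\rho^0(K))$ -- a quotient of the knot group in which the longitude need not die (take $m=0$ and $K$ the trefoil). So the tube is not isotopic into a standard ball, the local model $(B,F'\cap B)$ you want is unavailable, and the ball-by-ball splitting of the branched cover cannot be read off. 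A sanity check that something must go wrong: if the tubes were trivial, then $\sigma^{m}_T(K)$ would be the split sum $\tau^{m\pm 2}\rho^0(K)\#T$, so $\sigma^{\pm 1}_T(K)$ would be smoothly unknotted -- answering Question \ref{question:turnedtorus}, which the paper explicitly does not do. For the same reason, your plan to realize the ``$\pm 2$'' by a downstairs movie-move isotopy of surfaces in $S^4$ is too strong: the shift by $2$ is a phenomenon of the double cover, not of the surfaces themselves (it appears because the curve $(m+1)C_1+C_2$ on the surgery torus lifts to $(m+2)\widetilde{C}_1+2\widetilde{C}_3$ once one branches over the \emph{turned} companion torus). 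The ``mod-$2$ framing refinement'' is also off: for a fixed guiding arc there are only two compatible framings up to isotopy (one orientable, one not), so ``inserting a full twist into the tube'' is not an operation that needs to be absorbed.

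The paper avoids the tube entirely. It presents $\sigma^m(K)$, resp.\ $\sigma^m_T(K)$, as Fintushel--Stern torus surgery on the component $T$ of the spun Hopf link applied to the pair $(S^4,R)$, resp.\ $(S^4,R')$ with $R'$ the Gluck-twisted image of $R$; it then takes the double cover branched over the unknotted torus $R$ (or $R'$), obtaining $S^2\times S^2$ containing an unknotted $\widetilde{T}$ in an $S^4$ summand, and identifies the lifted surgery along $\widetilde{T}$ with the surgery description \eqref{eq:covergluing} of $\Sigma_2(\tau^{m}\rho^0(K))$, resp.\ $\Sigma_2(\tau^{m+2}\rho^0(K))$. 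If you want to salvage a tube-based argument, you would need to show that the lift of the guiding arc becomes standard in the branched cover and control its framing there -- which amounts to redoing the paper's lifting computation in different language.
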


\begin{proof}[Proof of Theorem \ref{theorem:twistturnedtoruscover} following Theorem \ref{prop:twistturnedtoricovers}]
By Theorem \ref{prop:twistturnedtoricovers}, $\Sigma_2(\sigma^{2m+1}_T(K))\cong\Sigma_2(\tau^{2m+3}\rho^0(K))\# (S^2\times S^2)$. 
Theorem \ref{thm:main} tells us that $\Sigma_2(\tau^{2m+3}\rho^0(K))$ is diffeomorphic to either $\Sigma_2(\tau^{1}\rho^0(K))$ or $\Sigma_2(\tau^{-1}\rho^0(K))$, depending on whether $2m+1\equiv 1$ or $3\pmod{4}$. The surface $\tau^{\pm 1}\rho^0(K)$ is smoothly unknotted, so we conclude that $\Sigma_2(\tau^{2m+3}\rho^0(K))\cong S^4$ and hence 
$\Sigma_2(\sigma^{2m+1}_T(K))\cong S^2\times S^2$.
\end{proof}

\subsection*{Organization}

In Section \ref{sec:thm:main}, we re-introduce twist-roll spun knots via torus surgery and then prove Theorem \ref{thm:main}. In Section \ref{sec:torusthm} we prove Theorems \ref{theorem:twistturnedtoruscover} and \ref{prop:twistturnedtoricovers}, our main theorems about turned tori.

\subsection*{Acknowledgements}
The authors thank David Baraglia for pointing out an error
in a (now removed) corollary in the first version of this paper due to the authors
conflating equivariant diffeomorphism with diffeotopy.

\section{Proof of Theorem \ref{thm:main}}\label{sec:thm:main}

To simplify notation, let $S\sqcup T \subset S^4$ denote a link of an unknotted 2-sphere $S$ and an unknotted torus $T$ which bounds a solid torus centered about a curve on $S$. On $T$, we fix dual curves $C_1$ and $C_2$ such that $C_1$ bounds a framed\footnote{As common in the literature, a surface $\Delta$ with boundary on a closed surface $F$ in $S^4$ is said to be ``framed" if the normal bundle $N_F (\partial \Delta)$ of $\partial \Delta$ in $F$ extends to some 1-dimensional subbundle of $N_{S^4}(\Delta)$.} disk into the complement of $T$ that intersects $S$ once, while $C_2$ bounds a framed disk in the complement of $S\sqcup T$ (see Figure \ref{fig:sandt}). From now on, we will write $\nu(\cdot)$ to indicate a tubular neighborhood. For integers $a,b,c\in\mathbb{Z}$ (with $a,b$ not both zero), we write $(aC_1+bC_2)^c$ to indicate a curve in $\partial (S^4\setminus\nu(T))$ which projects homeomorphically to $aC_1+bC_2$ under the map $\partial (S^4\setminus\nu(T))=T\times S^1\to T$ 
and that represents $c\in\mathbb{Z}=H_1(S^4\setminus T)$. We write $\mu(T)$ to denote a meridian of $T$ in $\partial (S^4\setminus\nu(T))$. 
\begin{figure}
\labellist
\pinlabel{\textcolor{red}{$C_1$}} at 147 53
\pinlabel{\textcolor{blue}{$C_2$}} at 60 46
\endlabellist
    \includegraphics[width=50mm]{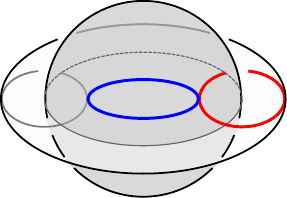}
    \caption{A smoothly unknotted 2-sphere $S\subset S^4$ and an unknotted torus $T$ that bounds a solid torus centered about a curve on $S$. On $T$ we indicate a curve $C_1$ that bounds a framed disk in the complement of $T$ that intersects $S$ once and a dual curve $C_2$ that bounds a framed disk in the complement of $S\cup T$.}\label{fig:sandt}
\end{figure}

Theorem \ref{thm:main} relies on the following key observation regarding  Fintushel--Stern knot surgery \cite{fintushelstern:firsttorussurgery}. 
\begin{proposition}\label{prop:main}
Fix a classical knot $K$ and integers $m,n$. Let $(W, S')$ be the pair obtained from $(S^4,S)$ by removing $\nu(T)$ and regluing $(S^3\setminus\nu(K))\times S^1$ according to the following gluing map, where $\lambda(K),\mu(K)$ respectively denote a 0-framed longitude and meridian of $K$ in $\partial(S^3\setminus\nu(K))$.
\begin{align*}
    \mu(T)&\leftrightarrow \lambda(K)\times 0,\\
     C_1^0&\leftrightarrow \mu(K)\times 0,\\
     (mC_1+C_2)^n&\leftrightarrow \pt\times S^1,
\end{align*}
where $\pt$ is a point in $\partial (S^3 \backslash \nu(K))$. 
Then $(W,S')\cong(S^4,\tau^m\rho^n(K))$.
\end{proposition}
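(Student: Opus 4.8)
The plan is to recognize the prescribed surgery as Litherland's deform-spinning construction, so that the proposition splits into two ingredients: a base case identifying the ``straight'' regluing with ordinary spinning, and a twisting/rolling step identifying the remaining gluing data with the $m$-twist $n$-roll monodromy. First I would fix the spinning model $S^4=(D^3\times S^1)\cup(S^2\times D^2)$ and realize $S$ as the spun unknot, with the latitude circle $\gamma=\{q\}\times S^1\subset S$ playing the role of the curve about which $T$ is centered. In this model I would verify the identification $\nu(T)\cong(S^3\setminus\nu(U))\times S^1$ of the surgery region with the unknot exterior times the spinning circle, under which $C_1\mapsto\mu(U)$, $\mu(T)\mapsto\lambda(U)$, and $C_2\mapsto(\text{the spinning }S^1)$. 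This records that for $K=U$ and $m=n=0$ the prescribed gluing map is exactly the identity and returns $(S^4,S)=(S^4,\tau^0\rho^0(U))$.

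For the base case $m=n=0$ and general $K$, the gluing replaces the factor $S^3\setminus\nu(U)$ by $S^3\setminus\nu(K)$ while matching $\mu(K)\leftrightarrow C_1$ and $\lambda(K)\leftrightarrow\mu(T)$. Because $C_1$ is a meridian of $S$ bounding a disk meeting $S$ once, this swap is precisely the operation of inserting the $1$-tangle of $K$ into the meridian-disk family of $S$ along $\gamma$, i.e.\ spinning $K$ about $\gamma$. Hence the resulting pair is $(S^4,\tau^0\rho^0(K))$; in particular $W$ is $S^4$ because, under this identification, the construction is literally the spinning construction, which a priori takes place in $S^4$. One can cross-check the identification by noting $H_*(W)\cong H_*(S^4)$ (knot exteriors are homology $S^1\times D^2$) and that $\pi_1(W\setminus S')\cong\pi_1(S^3\setminus K)$ via $C_1\mapsto\mu(K)$, matching the known group of a spun knot.

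For general $m,n$ I would show that replacing the gluing curve $C_2$ by $(mC_1+C_2)^n$ amounts to precomposing the base gluing with the self-diffeomorphism of $(S^3\setminus\nu(K))\times S^1$ that, as one traverses the spinning circle, applies $m$ meridional rotations (about $\mu(K)$) and $n$ longitudinal rotations (about $\lambda(K)$) to the knot exterior. Under the dictionary $C_1=\mu(K)$ and $\mu(T)=\lambda(K)$, the class $mC_1+C_2+n\,\mu(T)$ sent to the spinning $S^1$ is exactly the graph of this monodromy, so the surgered pair is the image of $\tau^0\rho^0(K)$ under $m$ meridional twists and $n$ rolls. By Litherland's definition of deform-spinning this is precisely $\tau^m\rho^n(K)$, completing the identification $(W,S')\cong(S^4,\tau^m\rho^n(K))$.

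The main obstacle is the bookkeeping in the twisting/rolling step: one must pin down conventions so that the coefficient of $C_1$ (the meridional, hence twisting, direction) contributes exactly $m$ twists and the homological winding $n$ in the $\mu(T)=\lambda(K)$ (longitudinal, hence rolling) direction contributes exactly $n$ rolls, with correct signs and no cross-terms. This requires care with the framed-disk conventions defining the superscripts $(\,\cdot\,)^c$ and with the precise generators Litherland uses for the twisting and rolling circle actions, together with a check that the regluing diffeomorphism extends over the complementary solid region so that the ambient manifold remains $S^4$ and the whole correspondence is a genuine diffeomorphism of pairs.
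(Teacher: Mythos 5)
Your proposal is correct and follows essentially the same route as the paper: both identify the prescribed torus surgery with Litherland's deform-spinning, using that $\nu(T)$ is fiberwise an unknot exterior times the spinning circle (the paper phrases this via spinning the Hopf tangle), that the $\mu(T)\leftrightarrow\lambda(K)$, $C_1\leftrightarrow\mu(K)$ identifications insert the tangle of $K$ into each meridional ball, and that sending $(mC_1+C_2)^n$ to $\pt\times S^1$ imposes exactly the monodromy $\phi_{m,n}$ of $m$ meridional and $n$ longitudinal twists. The convention-checking you flag at the end is precisely what the paper handles by writing $\phi_{m,n}$ explicitly on $\mu(K)\times\lambda(K)\times[0,1]$.
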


In the above notation, we specify identifications of curves; this is sufficient to specify a diffeomorphism $T^3\to T^3$ up to smooth isotopy, as $\pi_0(\Diff^+(T^3))\cong SL(3,\mathbb{Z})$ and our chosen curves in each 3-torus (up to smooth isotopy) describe a parametrization $T^3=S^1\times S^1\times S^1$ (on which  $SL(3,\mathbb{Z})$ acts). We will use this notation in future surgeries.

\begin{proof}[Proof of Proposition \ref{prop:main}]
When $m=n=0$, this is due to Fintushel--Stern \cite{fintushestern:surfacesin4manifolds}; in this case the operation is called ``rim surgery." When $n=0$ for general $m$, this is due to H.\ J.\ Kim \cite{heejungkim}; in this case the operation is called ``twist rim surgery." The following proof including the case $n\neq 0$ is not substantially different.

\begin{figure}
    \centering
    \includegraphics[width=0.8\linewidth]{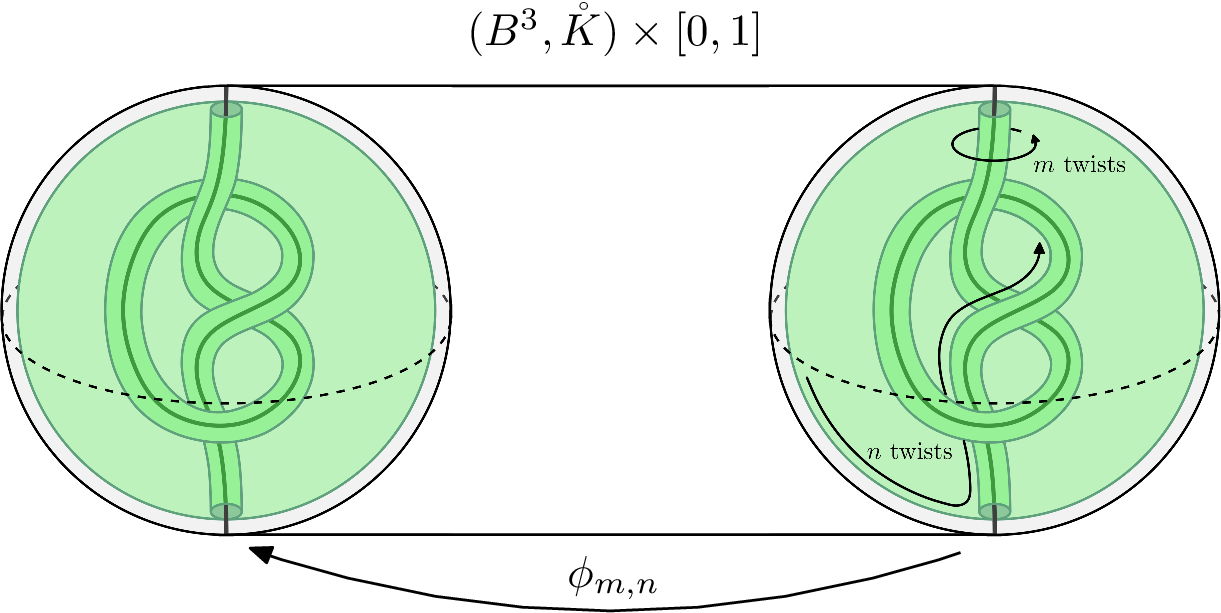}
    \caption{Litherland's description of the 2-knot $\tau^m\rho^n (K)$,  which is obtained by identifying the ends of $(B^3, \mathring{K}) \times [0,1]$ under the diffeomorphism $\phi_{m,n}$, and gluing in $(S^2 \times D^2, \partial \mathring{K} \times D^2)$.}
    \label{fig:litherland}
\end{figure}

Let $(B^3,\mathring{K})$ denote the tangle obtained from $(S^3,K)$ by deleting a small ball intersecting $K$ in a boundary-parallel 1-stranded tangle. Litherland \cite{litherland} constructs $\tau^m\rho^n(K)$ as follows (see Figure~\ref{fig:litherland}).
\begin{equation}\label{eq:twistroll}(S^4,\tau^m\rho^n(K))\cong \frac{(B^3,\mathring{K})\times I}{(x,1)\sim(\phi_{m,n}(x),0)}\cup \left(S^2\times D^2, \partial\mathring{K}\times D^2\right).\end{equation}
The map $\phi_{m,n}:B^3\to B^3$ restricts to the identity on $\partial B^3$ (and $\mathring{K}$), so the gluing of $S^2\times D^2$ to $(B^3\times I)/\hspace{-.2em}\sim\hspace{.5em}= B^3\times S^1$ is canonical. The map $\phi_{m,n}$ is supported in a small neighborhood of a torus $F=\partial(\nu(\partial B^3\cup\mathring{K}))$ parameterized as $F=\mu(K)\times\lambda(K)$. Specifically, for $(\theta_1,\theta_2,t)\in \mu(K)\times\lambda(K)\times[0,1]$, we have \[\phi_{m,n}(\theta_1,\theta_2,t)=(\theta_1+m\cdot 2\pi t,\theta_2+n\cdot 2\pi t, t).\]

\begin{figure}
\labellist
\pinlabel{\textcolor{red}{$C_1$}} at 82 48
\pinlabel{$\mathring{H}$} at 70 78
\pinlabel{\textcolor{darkgreen}{$\mu(T)$}} at 20 67
\endlabellist
\includegraphics[width=40mm]{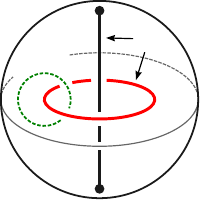}
    \caption{The Hopf tangle $\mathring{H}$, whose spin yields the link $S\cup T$.}
    \label{fig:tangle}
\end{figure}

On the other hand, the link $S\cup T$ is obtained by spinning the tangle $\mathring{H}$ (``$H$" for ``Hopf"), as in Figure~\ref{fig:tangle}. That is,
\begin{equation}\label{eq:spun}(S^4,S\cup T)\cong \left((B^3,\mathring{H})\times S^1\right)\cup(S^2\times D^2, \partial\mathring{H}\times D^2).
\end{equation}
\stepcounter{equation}
Descriptions \eqref{eq:twistroll} and \eqref{eq:spun} immediately show that $(S^4,\tau^m\rho^n(K))$ is obtained from $(S^4,S)$ by surgering out $\nu(T)$ and regluing $(S^3\setminus\nu(K))\times S^1$. Specifically, in each copy of $B^3\times t$ we surger the closed circle intersection with $T$ (a copy of $C_1$) and replace it with $S^3\setminus\nu(K)$, taking the monodromy about the $S^1$ factor to be the map $\phi_{m,n}$. That is, we perform this surgery on $T$ via the gluing \[\mu(T)\leftrightarrow\lambda(K)\times 0, \qquad C_1^0\leftrightarrow\mu(K)\times 0,\] 
(thus ensuring that each $(B^3,I)$ becomes a copy of $(B^3,\mathring{K})$), and\[(mC_1+C_2)^n\leftrightarrow\pt\times S^1,\]so that the monodromy about the $S^1$ factor consists of $m$ meridional and $n$ longitudinal twists, i.e.\ the map $\phi_{m,n}$. 
\end{proof}

Now we are ready to prove the main theorem.

\begin{proof}[{Proof of Theorem \ref{thm:main}}]
For a fixed knot $K$ and integer $n$, let $X_m:=\Sigma_2(\tau^m\rho^n(K))$. We aim to prove Theorem \ref{thm:main}, i.e.\ $X_m\cong X_{m+4}$ and $X_m\#\mp\mathbb{CP}^2\cong X_{m+2}\#\mp\mathbb{CP}^2$. 

By Proposition \ref{prop:main}, $X_m$ can be obtained from $S^4$ by first taking the 2-fold branched cover over the unknotted 2-sphere $S$ and then performing surgery on the lift $\widetilde{T}$ of $T$. Since $S$ is unknotted, its 2-fold branched cover is again $S^4$; since $T$ bounds a solid torus disjoint from $S$ whose core is a meridian of $S$ (see Figure~\ref{fig:tangle}) the torus $\widetilde{T}$ is again an unknotted torus. The curve $C_1$ is a meridian of $S$ and hence lifts to a connected curve $\widetilde{C}_1$; the curve $C_2$ lifts to two components, one of which we call $\widetilde{C}_2$. If $m$ is odd, let $\gamma_m$ be a curve on $\widetilde{T}$ representing the homology class $m\widetilde{C}_1+2\widetilde{C}_2$. If $m$ is even, let $\gamma_m$ be a curve on $\widetilde{T}$ representing the homology class $\frac{m}{2}\widetilde{C}_1+\widetilde{C}_2$. This is chosen so that the curve $mC_1+C_2$ lifts to $\gamma_m$ (if $m$ is odd) or two copies of $\gamma_m$ (if $m$ is even). Since $\mu(T)$ lifts to two copies of $\mu(\widetilde{T})$, this means $(mC_1+C_2)^n$ lifts to $\gamma_m^{2n}$ if $m$ is odd and two copies of $\gamma_m^n$ if $m$ is even.

The surgery performed on $\widetilde{T}$ to obtain $X_m$ is the 2-fold cover of the surgery described in Proposition \ref{prop:main}. Let $Y$ denote the 2-fold cyclic cover of $S^3\setminus\nu(K)$ and $\phi:Y\to Y$ the associated deck transformation. Let $\widetilde{\lambda}(K)$ denote one component of the lift of $\lambda(K)$ to $Y$ and $\widetilde{\mu}(K)$ be the lift of $\mu(K)$. If $m$ is odd, let $\pt\, \widetilde{\times}\, S^1$ denote the lift  of $\pt \times S^1$ to $Y\times_{\phi^m}S^1$, where $\pt$ is a point in $\partial(S^3 \backslash \nu{K})$. If $m$ is even, let $\pt\, \widetilde{\times} \, S^1$ denote one component of the 2-component lift  of $\pt \times S^1$ to $Y\times_{\phi^m}S^1$. Then by lifting the surgery from Proposition \ref{prop:main} to the 2-fold cover branched over $S$, we find that $X_m$ is obtained from $S^4$ by removing $\nu(\widetilde{T})$ and regluing the mapping torus $Y\times_{\phi^m} S^1$ according to the gluing \begin{align*}\mu(\widetilde{T})&\leftrightarrow \widetilde{\lambda}(K)\times 0\\
\widetilde{C}_1^0&\leftrightarrow \widetilde{\mu}(K)\times 0\tag{\theequation}\label{eq:covergluing}\\
\begin{cases}\gamma_m^{2n}&\text{if $m$ is odd}\\\gamma_m^n&\text{if $m$ is even}\end{cases}&\leftrightarrow \pt\, \widetilde{\times}\, S^1\end{align*}
\stepcounter{equation}

The map $\phi$ is an involution, so for $m$ even $Y\times_{\phi^m} S^1=Y\times S^1$; for $m$ odd $Y\times_{\phi^m} S^1=Y\times_\phi S^1$.

\begin{claim}\label{claim:ins4}
    There is a smooth isotopy of $S^4$ taking $\widetilde{T}$ to $\widetilde{T}$ oriented setwise that takes $\widetilde{C}_1$ to $\widetilde{C}_1$ as an oriented curve and takes $\gamma_m$ to $\gamma_{m+4}.$
\end{claim}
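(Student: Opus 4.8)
The plan is to exhibit an explicit self-diffeomorphism of $S^4$ realizing the required curve correspondence, and the natural candidate is a twist along the unknotted torus $\widetilde T$ supported in a neighborhood of the solid torus it bounds. Recall that $\widetilde T$ is unknotted and bounds a solid torus $V$ whose core is a meridian of the (now vanished) branch sphere, so $\nu(\widetilde T)\cong T^2\times[-1,1]$ and we may identify $\partial V$ with $\widetilde T\times\{-1\}$. First I would choose a model neighborhood $\widetilde T\times[-1,1]\subset S^4$ and define a diffeomorphism $\Phi$ that is the identity outside this collar and, on $\widetilde T\times[-1,1]$, rotates the $\widetilde C_1$-direction of the torus: concretely, parametrizing $\widetilde T=\{(\theta_1,\theta_2)\}$ with $\widetilde C_1$ the $\theta_1$-circle and $\widetilde C_2$ the $\theta_2$-circle, set $\Phi(\theta_1,\theta_2,s)=(\theta_1+2\pi f(s)\,\theta_2,\theta_2,s)$ where $f$ is a smooth bump interpolating from $0$ to an integer $k$ across the collar. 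Such a map is a well-defined diffeomorphism of the collar fixing both boundary components pointwise, hence extends by the identity to a diffeomorphism of $S^4$.

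The key point is to verify that this twist, applied the correct number of times, carries $\gamma_m$ to $\gamma_{m+4}$ while fixing $\widetilde C_1$ and preserving $\widetilde T$ setwise. On the level of homology of $\widetilde T$, the twist $\Phi$ with $f$ increasing by $k$ acts by the Dehn-twist matrix sending $\widetilde C_2\mapsto \widetilde C_2+k\widetilde C_1$ and $\widetilde C_1\mapsto\widetilde C_1$. I would then compute the effect on $\gamma_m$ in both parity cases. For $m$ even, $\gamma_m=\tfrac m2\widetilde C_1+\widetilde C_2$, so a single such twist ($k=1$) sends it to $(\tfrac m2+1)\widetilde C_1+\widetilde C_2=\gamma_{m+2}$; applying $k=2$ yields $\gamma_{m+4}$, as desired. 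For $m$ odd, $\gamma_m=m\widetilde C_1+2\widetilde C_2$, and the twist with $k$ sends it to $(m+2k)\widetilde C_1+2\widetilde C_2$; choosing $k=2$ again gives $(m+4)\widetilde C_1+2\widetilde C_2=\gamma_{m+4}$. In both cases $\widetilde C_1$ is fixed. Thus the diffeomorphism realizing the claim is $\Phi$ with total twisting parameter $2$ (i.e.\ two full meridional twists of the collar), and it manifestly preserves orientations as stated.

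I would also need to confirm that $\Phi$ extends over all of $S^4$ and not merely the collar, but this is immediate since $\Phi$ is the identity on $\partial(\widetilde T\times[-1,1])$ and on the complementary region. The only genuinely geometric input is that $\widetilde T$ is unknotted and bounds the solid torus $V$, so that a product collar exists and the twist is supported away from any knotting; this is exactly what was established in the paragraph preceding the claim (using that $S$ is unknotted and $T$ bounds a solid torus meeting $S$ in a meridian, whence $\widetilde T$ is unknotted in $S^4$). I expect the main obstacle to be bookkeeping rather than substance: one must be careful that the homological computation is realized by an \emph{ambient} isotopy of $S^4$ (not just an abstract self-map of $\widetilde T$), and that the chosen orientations of $\widetilde C_1$ and $\widetilde T$ are genuinely preserved rather than reversed. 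Since the twist is isotopic to the identity through the family obtained by scaling $f$ down to $0$, it is automatically smoothly isotopic to $\mathrm{id}_{S^4}$, giving the required isotopy rather than merely a diffeomorphism.
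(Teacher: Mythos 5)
Your homological bookkeeping (two Dehn twists about $\widetilde{C}_1$ send $\gamma_m$ to $\gamma_{m+4}$ in both parity cases) agrees with the paper, but the construction of the ambient isotopy realizing those Dehn twists is the entire content of the claim, and your construction does not work. The map $\Phi(\theta_1,\theta_2,s)=(\theta_1+2\pi f(s)\theta_2,\theta_2,s)$ is not well defined on $T^2\times[-1,1]$ unless $f(s)$ is an integer for every $s$: replacing $\theta_2$ by $\theta_2+2\pi$ changes the first coordinate by $4\pi^2f(s)$, which is a multiple of $2\pi$ only when $2\pi f(s)\in\mathbb{Z}$. More structurally, no fiber-preserving diffeomorphism of $T^2\times[-1,1]$ can be the identity on both boundary tori and a Dehn twist on an interior fiber, since that would be a path in $\Diff(T^2)$ from the identity to itself passing through the Dehn twist, and the Dehn twist does not lie in the identity component of $\Diff(T^2)$. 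For the same reason the family ``obtained by scaling $f$ down to $0$'' does not exist, so even if you had the diffeomorphism you would not yet have the required \emph{isotopy}.

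The decisive objection is that your argument proves too much. Exactly the same collar twist with $k=1$ would give an ambient isotopy of $S^4$ preserving $\widetilde{T}$ and sending $\widetilde{C}_2$ to $\widetilde{C}_1+\widetilde{C}_2$, hence (for $m$ even) sending $\gamma_m=\frac m2\widetilde{C}_1+\widetilde{C}_2$ to $\gamma_{m+2}$ — you assert this explicitly. But this is impossible: the Rokhlin quadratic form of the unknotted torus satisfies $q(\widetilde{C}_2)=0$ while $q(\widetilde{C}_1+\widetilde{C}_2)=q(\widetilde{C}_1)+q(\widetilde{C}_2)+\widetilde{C}_1\cdot\widetilde{C}_2=1$, and $q$ is preserved by ambient diffeomorphisms. (The paper makes exactly this point after the proof of Theorem \ref{thm:main}; it is the reason the statement for $m\mapsto m+2$ requires connect-summing with $P_\pm$ and a handle slide in $\mp\mathbb{CP}^2$.) So a single Dehn twist about $\widetilde{C}_1$ is \emph{not} realizable by any ambient isotopy of $S^4$, and consequently no construction supported in a product neighborhood of $\widetilde{T}$ and built fiberwise from Dehn twists can succeed. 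What the paper actually uses is a global self-isotopy of the unknotted torus in $S^4$ (Figure \ref{fig:liftisos4}) which sweeps $\widetilde{T}$ through the ambient $S^4$ and returns it to itself having effected \emph{two} Dehn twists about $\widetilde{C}_1$ — squares of Dehn twists are precisely what the Rokhlin form permits. You need to supply that geometric input (or cite the realizability of $q$-preserving mapping classes for the unknotted torus); the local twist cannot replace it.
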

\begin{proof}[Proof of Claim \ref{claim:ins4}]
On the unknotted torus $\widetilde{T}$, the curves $\widetilde{C}_1$ and $\widetilde{C}_2$ are as in Figure \ref{fig:liftisos4} (left). In Figure \ref{fig:liftisos4} from left to right, we exhibit a well-known self-isotopy of the unknotted torus effecting two Dehn twists about the curve $\widetilde{C}_1$. The isotopy thus clearly preserves $\widetilde{C}_1$.

If $m$ is odd, the curve $\gamma_m=m\widetilde{C}_1+2\widetilde{C}_2$ intersects $\widetilde{C}_1$ twice and thus after the two Dehn twists becomes \begin{align*}
   4\widetilde{C}_1+ m\widetilde{C}_1+2\widetilde{C}_2&= (m+4)\widetilde{C}_1+2\widetilde{C}_2\\&=\gamma_{m+4}.
   \end{align*}

\begin{figure}
\labellist
\pinlabel{\textcolor{red}{$\widetilde{C}_1$}} at 42 -6
\pinlabel{\textcolor{blue}{$\widetilde{C}_2$}} at 42 50
\endlabellist
\includegraphics[width=125mm]{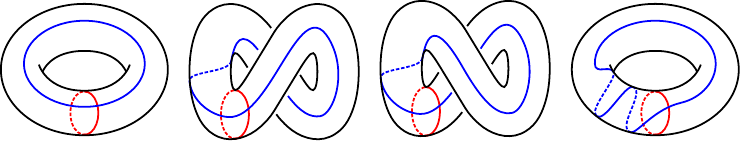}
\caption{Left: the unknotted torus $\widetilde{T}$ in $S^4$. From left to right, we exhibit an ambient isotopy of $S^4$ taking $\widetilde{T}$ to itself setwise but effecting two Dehn twists about $\widetilde{C}_1$. Reversing the direction of the isotopy achieves Dehn twists of the opposite sign.}\label{fig:liftisos4}
\end{figure}
    If $m$ is even, the curve $\gamma_m=\frac{m}{2}\widetilde{C}_1+\widetilde{C}_2$ intersects $\widetilde{C}_1$ once and thus after the two Dehn twists becomes \begin{align*}
   2\widetilde{C}_1+ \frac{m}{2}\widetilde{C}_1+\widetilde{C}_2&= \frac{m+4}{2}\widetilde{C}_1+\widetilde{C}_2\\&=\gamma_{m+4}.\qedhere
   \end{align*}
\end{proof}

Claim \ref{claim:ins4}, along with the above description of $X_m$ in \eqref{eq:covergluing}, shows that $X_m\cong X_{m+4}$ as claimed (since the isotopy of Claim \ref{claim:ins4} preserves $\mu(\widetilde{T})$ and $\widetilde{C}_1^0$ while sending $\gamma_m^k$ to $\gamma_{m+4}^k$ for any $k$).

It remains to show that $X_m\#(\mp\mathbb{CP}^2)\cong X_{m+2}\#(\mp\mathbb{CP}^2)$.

As before, $X_m\#(\mp\mathbb{CP}^2)$ is obtained from $\mp\mathbb{CP}^2$ by removing a neighborhood of an unknotted torus $\widetilde{T}$ and replacing it with $Y\times_{\phi^m} S^1$ according to gluing \eqref{eq:covergluing}. Thus, it is sufficient to prove the following claim.

\begin{claim}\label{claim:incp2}
    There is a smooth isotopy of $\mp\mathbb{CP}^2$ taking $\widetilde{T}$ to $\widetilde{T}$ oriented setwise that takes $\widetilde{C}_1$ to $\widetilde{C}_1$ as an oriented curve and takes $\gamma_m$ to $\gamma_{m+2}$. 
\end{claim}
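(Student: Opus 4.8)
The plan is to reduce Claim~\ref{claim:incp2} to realizing a \emph{single} Dehn twist about $\widetilde{C}_1$ by an ambient isotopy of $\mp\mathbb{CP}^2$ that preserves $\widetilde{T}$ (oriented, setwise) and $\widetilde{C}_1$ (oriented), exactly as Claim~\ref{claim:ins4} realized \emph{two} such twists in $S^4$. A Dehn twist about $\widetilde{C}_1$ fixes $\widetilde{C}_1$ and sends $\widetilde{C}_2\mapsto\widetilde{C}_2+\widetilde{C}_1$, so $k$ twists send
\begin{align*}
m\widetilde{C}_1+2\widetilde{C}_2&\longmapsto (m+2k)\widetilde{C}_1+2\widetilde{C}_2 &&(m\text{ odd}),\\
\tfrac{m}{2}\widetilde{C}_1+\widetilde{C}_2&\longmapsto \tfrac{m+2k}{2}\widetilde{C}_1+\widetilde{C}_2 &&(m\text{ even}),
\end{align*}
that is, $\gamma_m\mapsto\gamma_{m+2k}$ in both parities. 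Thus a single ($k=1$) Dehn twist gives precisely $\gamma_m\mapsto\gamma_{m+2}$, and because Dehn twists and ambient isotopies are orientation preserving, the orientation conditions on $\widetilde{T}$ and $\widetilde{C}_1$ come for free.

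Next I would invoke the standard disk-spinning mechanism for realizing Dehn twists ambiently. If $\widetilde{C}_1$ bounds an embedded disk $D\subset\mp\mathbb{CP}^2$ whose interior is disjoint from $\widetilde{T}$, then rotating the normal $D^2$-fibers over a neighborhood $\nu(D)\cong D\times D^2$ (tapering the rotation to the identity near the core) is an ambient isotopy supported in $\nu(D)$ that induces on $\widetilde{T}\cap\nu(D)$ --- an annular neighborhood of $\widetilde{C}_1$ --- exactly $f$ Dehn twists about $\widetilde{C}_1$, where $f$ is the framing of $D$ measured against the framing $\widetilde{T}$ induces on $\widetilde{C}_1$. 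The two-twist isotopy of Claim~\ref{claim:ins4} is supported near the unknotted torus and hence is equally available in $\mp\mathbb{CP}^2$, so it suffices to realize a single \emph{odd} number of Dehn twists; combined with the even twists of Claim~\ref{claim:ins4} and their reverses, this lets me reach exactly $+1$. For this it is enough to exhibit one disk bounded by $\widetilde{C}_1$, with interior disjoint from $\widetilde{T}$, of odd relative framing.

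To produce such a disk I would start from the lift $\widetilde{D}_1$ of the meridian disk $D_1$ of the branch locus: its interior is disjoint from $\widetilde{T}$ and, since the double cover near the small disk $D_1$ is insensitive to whether the branch surface is $S$ or $P_\pm$, its relative framing equals the (even) value it has in the $S^4$ computation underlying Claim~\ref{claim:ins4}. Now let $\Sigma\subset\mp\mathbb{CP}^2$ be the standard generator of $H_2$, an embedded sphere of self-intersection $\mp1$; after isotoping $\Sigma$ off the small ball containing $\widetilde{T}$ and $\operatorname{int}\widetilde{D}_1$, I tube $\widetilde{D}_1$ to $\Sigma$ along an arc missing $\widetilde{T}$. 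The resulting disk $D'$ still has boundary $\widetilde{C}_1$ and interior disjoint from $\widetilde{T}$, but its relative framing has changed by the self-intersection $\mp1$ of $\Sigma$ and is therefore \emph{odd}. Spinning $\nu(D')$ realizes an odd number of Dehn twists, and composing with the even-twist isotopy of Claim~\ref{claim:ins4} (or adding boundary-twisted bands to $D'$) lands on exactly one net Dehn twist, with the sign chosen so that $\gamma_m\mapsto\gamma_{m+2}$.

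The crux --- and the step I expect to require the most care --- is the framing bookkeeping: verifying that spinning a framing-$f$ disk produces exactly $f$ Dehn twists relative to the surface framing, that $\widetilde{D}_1$ genuinely carries an even relative framing, and that tubing to $\Sigma$ changes this parity by the odd self-intersection $\mp1$. This parity flip is precisely where the odd ``half Euler number'' of $P_\pm$ (equivalently, the odd self-intersection of the generator of $H_2(\mp\mathbb{CP}^2)$) enters and separates the $\mp\mathbb{CP}^2$ case from the $S^4$ case of Claim~\ref{claim:ins4}, where only even twists are available. The remaining points --- disjointness of $\Sigma$ and the tube from $\widetilde{T}$, and preservation of all orientations --- are routine.
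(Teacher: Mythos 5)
Your overall strategy matches the paper's: reduce the claim to realizing a single Dehn twist about $\widetilde{C}_1$ by an ambient isotopy of $\mp\mathbb{CP}^2$ preserving $\widetilde{T}$ setwise and $\widetilde{C}_1$ as an oriented curve, and your homological computation $\gamma_m\mapsto\gamma_{m+2}$ in both parities is exactly the one in the paper. You have also correctly located why $\mp\mathbb{CP}^2$ differs from the $S^4$ case of Claim~\ref{claim:ins4}: the generator of $H_2(\mp\mathbb{CP}^2)$ has odd self-intersection, so after tubing, $\widetilde{C}_1$ bounds disks of odd relative framing in the complement of $\widetilde{T}$ and the mod-$2$ framing obstruction (the Rokhlin form) disappears. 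The paper realizes the single Dehn twist by an explicit banded-unlink isotopy --- a band slide over the $\mp1$-framed $2$-handle followed by a swim (Figure~\ref{fig:liftisocp2}) --- which is the diagrammatic incarnation of your ``tube $\widetilde{D}_1$ to $\Sigma$'' step.

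The gap is the step you yourself flag: the assertion that ``rotating the normal $D^2$-fibers over $\nu(D)\cong D\times D^2$, tapered to the identity near the core,'' is an ambient isotopy inducing $f$ Dehn twists on $\widetilde{T}$. As described, this construction produces no twist at all. In the local model, $\widetilde{T}\cap\nu(D)$ is the annulus $A=\{(\theta,t\,v(\theta)):\theta\in\partial D,\ |t|\le\varepsilon\}$ with $\deg v=f$, and a fiberwise rotation of $\nu(D)\to D$ by an angle $\alpha$ either satisfies $\alpha\in2\pi\mathbb{Z}$ near $A$ --- in which case the time-one diffeomorphism is the identity on a neighborhood of $A$ and induces the trivial mapping class on $\widetilde{T}$ --- or else rotates $A$ off of itself, so the time-one map does not preserve $\widetilde{T}$ at all. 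In neither case does the framing $f$ enter. (Consistency check: in $S^4$ every disk bounded by $\widetilde{C}_1$ has framing $0$, yet $T_{\widetilde{C}_1}^2$ is realized there, so whatever produces twists cannot be a naive normal-fiber rotation.) Extracting a single twist from the odd-framed disk genuinely requires an isotopy that drags the annulus $A$ across the disk and back --- this is precisely what the slide-and-swim isotopy of Figure~\ref{fig:liftisocp2} accomplishes --- and that is the lemma your sketch defers without proof. With such a lemma in hand, the remaining bookkeeping (the framing change $e(D')=e(\widetilde{D}_1)\mp1$ under tubing, and composing with the even twists of Claim~\ref{claim:ins4} to net exactly one twist of the desired sign) is fine; without it, the central content of the claim is not established.
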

\begin{proof}[Proof of Claim \ref{claim:incp2}]
In Figure \ref{fig:liftisocp2}, we exhibit a self-isotopy of the unknotted torus $\widetilde{T}$ effecting a Dehn twist about the curve $\widetilde{C}_1$. If $m$ is odd, the curve $\gamma_m=m\widetilde{C}_1+2\widetilde{C}_2$ intersects $\widetilde{C}_1$ twice and thus after the Dehn twist becomes \begin{align*}
   2\widetilde{C}_1+ m\widetilde{C}_1+2\widetilde{C}_2&= (m+2)\widetilde{C}_1+2\widetilde{C}_2\\&=\gamma_{m+2}.
   \end{align*}

   \begin{figure}
       \labellist
       \pinlabel{\textcolor{red}{$\widetilde{C}_1$}} at -10 162
       \pinlabel{\textcolor{blue}{$\widetilde{C}_2$}} at 50 115
       \pinlabel{\textcolor{darkgreen}{$\mp 1$}} at 130 139
       \pinlabel{\textcolor{darkgreen}{$\mp 1$}} at 350 139
       \pinlabel{\textcolor{darkgreen}{$\mp 1$}} at 130 16
       \pinlabel{\textcolor{darkgreen}{$\mp 1$}} at 350 16
       \pinlabel{\tiny$\mp1$} at 331 161
       \pinlabel{\tiny$\mp1$} at 66 38
       \pinlabel{\tiny$\mp1$} at 334 38
       \pinlabel{slide} at 175 167
       \pinlabel{\rotatebox{-90}{swim}} at 300 98
       \pinlabel{diagram isotopy} at 215 50
       \endlabellist
       \includegraphics[width=86mm]{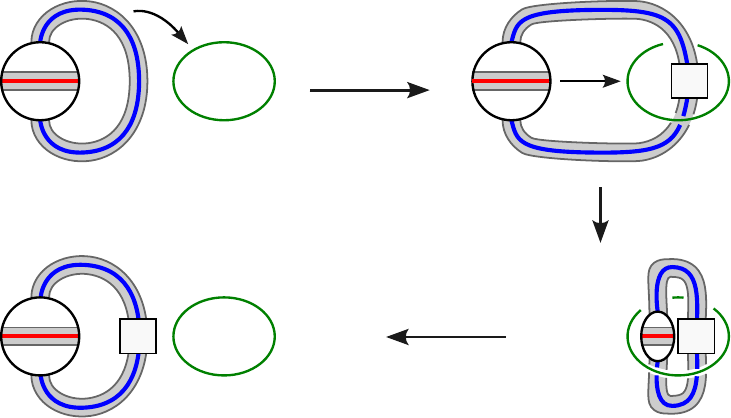}
       \caption{Top left: The unknotted torus $\widetilde{T}$ in $\mp\mathbb{CP}^2$, represented via a banded unlink diagram as in \cite{hugheskimmiller:isotopies}. We perform a slide of a band over a 2-handle and then a swim taking the 2-handle attaching circle through the other band. This describes an ambient isotopy taking $\widetilde{T}$ to $\widetilde{T}$ setwise but achieving a Dehn twist about $\widetilde{C}_1$. Reversing the isotopy achieves a Dehn twist of the opposite sign.}\label{fig:liftisocp2}
   \end{figure}

    If $m$ is even, the curve $\gamma_m=\frac{m}{2}\widetilde{C}_1+\widetilde{C}_2$ intersects $\widetilde{C}_1$ once and thus after the Dehn twist becomes \begin{align*}
   \widetilde{C}_1+ \frac{m}{2}\widetilde{C}_1+\widetilde{C}_2&= \frac{m+2}{2}\widetilde{C}_1+\widetilde{C}_2\\&=\gamma_{m+2}.\qedhere
   \end{align*}
\end{proof}

Claim \ref{claim:ins4}, along with the above description of $X_m$ in \eqref{eq:covergluing}, shows that \[\Sigma_2(\tau^m\rho^n(K)\# P_{\pm})\cong X_m\#\mp\mathbb{CP}^2\cong X_{m+2}\#\mp\mathbb{CP}^2\cong \Sigma_2(\tau^{m+2}\rho^n(K)\# P_{\pm}),\]
(since the isotopy  preserves $\mu(\widetilde{T})$ and  $\widetilde{C}_1^0$ while sending $\gamma_m^k$ to $\gamma_{m+2}^k$ for any $k$.)

This completes the proof of Theorem \ref{thm:main}.
\end{proof}

Claim \ref{claim:ins4} in the proof of Theorem \ref{thm:main} implicitly is inspired by the following definition. 

\begin{definition}[\cite{hirose}]
    For $F$ an oriented surface in $S^4$, the {\emph{Rokhlin quadratic form}} $q:H_1(F;\mathbb{Z})\to\mathbb{Z}/2\mathbb{Z}$ is defined as follows. Given a simple closed curve $C$ on $F$, if $C$ bounds a framed surface into the complement of $F$ then $q([C])=0$.  Otherwise, $q([C])=1$. We will refer to $q([C])$ as the {\emph{Rokhlin invariant}} of the curve $C$ in $F$.
\end{definition}

Our failure to conclude $\Sigma_2(\tau^m\rho^n(K))\cong\Sigma(\tau^{m+2}\rho^n(K))$ is partially due to the existence of the Rohklin quadratic form. In the strategy of the proof of Theorem~\ref{thm:main}, to show this diffeomorphism we would need to produce a diffeomorphism of $S^4$ taking the unknotted torus $\widetilde{T}$ to itself but sending the curve $\frac{m}{2}\widetilde{C}_1+\widetilde{C}_2$ to $\frac{m+2}{2}\widetilde{C}_1+\widetilde{C}_2$, if $m$ even. However, these two curves have different Rokhlin invariants, so this is impossible.

In the case $m$ odd, the Rokhlin quadratic form does not obstruct sending $m\widetilde{C}_1+2\widetilde{C}_2$ to $(m+2)\widetilde{C}_1+2\widetilde{C}_2$, as both curves have Rokhlin invariant zero. However, note that in the proof of Theorem \ref{thm:main}, it is not only $\gamma_m$ that determines the gluing map -- we wish to replace $\gamma_m$ with $\gamma_{m+2}$ while preserving $\widetilde{C}_1$ and $\mu(\widetilde{T})$. An isotopy of $S^4$ taking $\widetilde{T}$ to itself and taking $\widetilde{C_1}$ to itself while taking $\gamma_m=m\widetilde{C}_1+2\widetilde{C}_2$ to $\gamma_{m+2}=(m+2)\widetilde{C}_1+2\widetilde{C}_2$ must then take $\widetilde{C}_2$ to $\widetilde{C}_1+\widetilde{C}_2$. This is not possible, since the Rokhlin quadratic form differs on these curves.

\section{Branched covers of twist turn tori}\label{sec:torusthm}

In this section, we study the \emph{$m$-twist spun torus} $\sigma^m (K)$  and the \emph{turned $m$-twist spun torus} $\sigma^m_T (K)$ of a classical knot $K$. We again refer the reader to Boyle \cite{boyle} for a detailed construction of these tori. Another excellent reference is a survey paper of Larson \cite{larson2018surgery}.

We will first prove Theorem \ref{prop:twistturnedtoricovers}, following the proof of Theorem \ref{thm:main} (whose statement is quite similar).

\begin{prop:twistturnedtoricovers}
    For any knot $K$, 
    \begin{align*}
\Sigma_2(\sigma^m(K))&\cong\Sigma_2(\tau^m\rho^0(K))\# (S^2\times S^2)\\
\Sigma_2(\sigma^{m}_T(K))&\cong\Sigma_2(\tau^{m\pm 2}\rho^0(K))\# (S^2\times S^2).
    \end{align*}
\end{prop:twistturnedtoricovers}

\begin{proof}
Letting $H$ denote the Hopf link in $S^3$, we write $R\sqcup T$ to denote the spun Hopf link $\sigma^0(H)$. Let $S$ be the 2-sphere that is the axis of spinning, so $S^4\setminus\nu(S)\cong B^3\times S^1$ with $R\sqcup T$ intersecting each $B^3\times \pt$ in a copy of $H$. Let $C_1$ denote the curve on $T$ lying in $B^3\times 0$ and let $C_2$ be a dual curve on $T$ such that $R$ bounds a solid torus intersecting $T$ in the core curve $C_2$. See Figure~\ref{fig:spunHopflink}. 

Performing a Gluck twist on the 2-sphere $S$ yields a 4-manifold diffeomorphic to $S^4$. Abusing notation, we identify this 4-manifold with $S^4$ and the image of $T$ with $T$. In this identification, the curve $C_1$ can be taken to be fixed while curve $C_2$ is mapped to the Rokhlin invariant 1 curve $C_1+C_2$ in $T$. We denote the image of $R$ as $R'$. Again, see Figure \ref{fig:spunHopflink}.

(In other words, the torus $R'$ is a torus contained in a small neighborhood of $T$ that bounds a solid torus intersecting $T$ in the curve $C_1+C_2$. The Gluck twist description of $R'$ will be useful in matching the constructions of  \cite{boyle}.)

\begin{figure}
    \centering
    \includegraphics[width=0.9\linewidth]{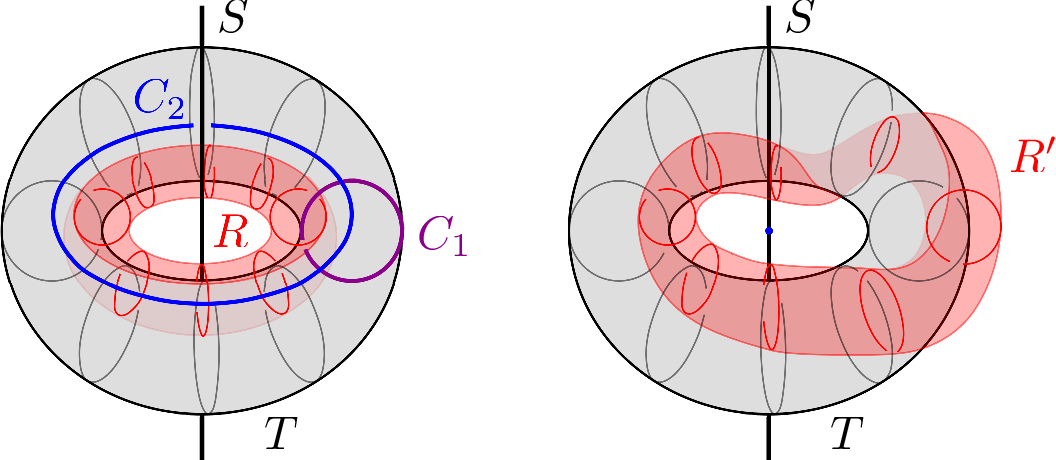}
    \caption{\emph{Left:} Spinning a Hopf link gives a pair of linked tori $T$ and $R$.  \emph{Right:} After a Gluck twist along the central sphere $S$, the torus $R'$ obtained from $R$ is centered about a Rokhlin invariant 1 curve on $T$. (The link $T\sqcup R'$ is the turned torus link of the Hopf link.)}
    \label{fig:spunHopflink}
\end{figure}

    \begin{proposition}\label{claim:noturn}
      Let $(W,F)$ be the pair obtained from $(S^4, R)$ by removing $\nu(T)$ and regluing $(S^3\setminus\nu(K))\times S^1$ according to the following gluing map.
      \begin{align*}
\mu(T)&\leftrightarrow\lambda(K)\times 0\\
         \tag{\theequation}\label{eq:claimnoturn} C_1^0&\leftrightarrow\mu(K)\times 0\\
          (mC_1+C_2)^0&\leftrightarrow\pt \times S^1,
      \end{align*}
      where $\pt$ is a fixed point in $\partial (S^3\setminus\nu(K))$. Then $(W,F)\cong(S^4,\sigma^m(K))$.
    \end{proposition}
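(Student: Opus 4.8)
The plan is to follow the proof of Proposition~\ref{prop:main} essentially verbatim, replacing the properly embedded arc that spun to the 2-sphere $S$ by a closed loop that spins to the torus $R$. Concretely, I would first record a spinning description of $\sigma^m(K)$ parallel to Litherland's description~\eqref{eq:twistroll}. View the Hopf link in the fiber as $H = C_1 \cup \ell \subset B^3\times 0$, where $C_1 = T\cap(B^3\times 0)$ spins to $T$ and $\ell = R\cap(B^3\times 0)$ spins to $R$. Let $\widehat{K}$ be the closed loop obtained by tying $\ell$ into $K$. The $m$-twist spun torus is then the spin of this single loop with the purely meridional twist monodromy $\phi_{m,0}$,
\[
(S^4,\sigma^m(K))\;\cong\;\frac{(B^3,\widehat{K})\times I}{(x,1)\sim(\phi_{m,0}(x),0)}\;\cup\;(S^2\times D^2,\varnothing),
\]
where the cap carries no part of the surface because $\widehat{K}$ is a closed loop, so its mapping torus is already closed inside $B^3\times S^1$. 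I would check this agrees with Boyle's construction~\cite{boyle}, using the already-noted fact that $\sigma^m(K)$ is $\tau^m\rho^0(K)$ with one tube attached: spinning the loop $\widehat K$ rather than the arc $\mathring{K}$ replaces the two polar caps of $\tau^m\rho^0(K)$ by exactly this tube.

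Second, exactly as in~\eqref{eq:spun}, I would present $R\sqcup T$ as the spin of the Hopf link $H = C_1\cup\ell$. Comparing the two spinning descriptions shows that $(S^4,\sigma^m(K))$ is obtained from $(S^4,R)$ by deleting $\nu(T)$ and regluing $(S^3\setminus\nu(K))\times S^1$: in each fiber $B^3\times t$ one surgers the circle $C_1$ and replaces $\nu(C_1)$ by $S^3\setminus\nu(K)$. Since $C_1$ and $\ell$ are Hopf-linked in $B^3$, the identifications $\mu(T)\leftrightarrow\lambda(K)\times 0$ and $C_1^0\leftrightarrow\mu(K)\times 0$ (a meridian--longitude swap) tie the $R$-loop $\ell$ into $K$, converting each fiber into a copy of $(B^3,\widehat{K})$; meanwhile $(mC_1+C_2)^0\leftrightarrow\pt\times S^1$ forces the monodromy about the $S^1$ factor to be $\phi_{m,0}$. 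This is verbatim the gluing~\eqref{eq:claimnoturn}, so it reproduces the description above and yields $F\cong\sigma^m(K)$. The ambient manifold $W$ is automatically $S^4$, since the spinning description takes place inside $(B^3\times S^1)\cup(S^2\times D^2)=S^4$ and twisting by $\phi_{m,0}$ (which fixes $\partial B^3$) leaves the total space a trivial $B^3$-bundle over $S^1$.

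The main obstacle is the first step: pinning down the spun-loop description of $\sigma^m(K)$ and confirming that it matches Boyle's definition of the twist spun torus, including that the loop $\ell$ spins to the correct torus direction with the correct framing. Once this description is in hand, the surgery comparison is identical to that of Proposition~\ref{prop:main} and introduces no new content; in particular the present statement differs from Proposition~\ref{prop:main} only in that the tracked surface is a spun loop (a torus) rather than a spun arc (a sphere).
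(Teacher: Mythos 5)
Your argument is correct and rests on the same two ingredients as the paper's own proof: the fiberwise surgery picture from the proof of Proposition~\ref{prop:main}, and the fact that $\sigma^m(K)$ is $\tau^m\rho^0(K)$ with a single tube attached. The paper packages this more economically---it notes that $R$ is the unknotted sphere $S$ of Proposition~\ref{prop:main} with a tube attached away from $\nu(T)$, so the surgery carries $S$ to $\tau^m\rho^0(K)$ and carries the tube along unchanged, producing $\sigma^m(K)$---whereas you re-run the fiberwise comparison directly for the spun closed loop; both routes are sound.
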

    \begin{proof}[Proof of Proposition \ref{claim:noturn}]
    This essentially follows immediately from Proposition \ref{prop:main}. Let $(W,S')$ be the pair of Proposition \ref{prop:main}, using the above choice of surgery on $T$. Then $F$ is obtained from $S'$ by attaching a single tube. After identifying $(W,S')$ with $(S^4,\tau^m\rho^0(K))$, this tube is precisely the tube that transforms $\tau^m\rho^0(K)$ into $\sigma^m(K)$.
    \end{proof}
\stepcounter{equation}

    \begin{proposition}\label{claim:withturn}
    Let $(W,F')$ be the pair obtained from $(S^4,R')$ by removing $\nu(T)$ and regluing $(S^3\setminus\nu(K))\times S^1$ according to the following gluing map.
      \begin{align*}
          \mu(T)&\leftrightarrow\lambda(K)\times 0\\
         \tag{\theequation}\label{eq:claimwithturn} C_1^0&\leftrightarrow\mu(K)\times 0\\
          ((m+1)C_1+C_2)^0&\leftrightarrow\pt \times S^1,
      \end{align*}
      where $\pt$ is a fixed point in $\partial (S^3\setminus\nu(K))$. Then $(W,F')\cong(S^4,\sigma^m_T(K))$. 
    \end{proposition}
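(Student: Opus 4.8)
The plan is to deduce Proposition~\ref{claim:withturn} from Proposition~\ref{claim:noturn} by commuting the Gluck twist past the torus surgery, and then to identify the Gluck twist along the spinning sphere with Boyle's turning operation. Recall from the setup that $R'$ is the image of $R$ under the Gluck twist $G$ along the spinning sphere $S$, and that $G$ fixes $C_1$ while sending $C_2$ to the Rokhlin invariant $1$ curve $C_1+C_2$ (hence $mC_1+C_2$ to $(m+1)C_1+C_2$); since $\mu(T)$ has linking number $0$ with $S$, I would also record that $G$ fixes $\mu(T)$. The essential geometric point is that $G$ is supported in $\nu(S)$, whereas the knot surgery of Proposition~\ref{claim:withturn} is supported in the disjoint neighborhood $\nu(T)$.

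First I would make this commutation precise. Writing $\Phi_\gamma$ for the operation of excising $\nu(T)$ and regluing $(S^3\setminus\nu(K))\times S^1$ by $\mu(T)\leftrightarrow\lambda(K)\times 0$, $C_1^0\leftrightarrow\mu(K)\times 0$, $\gamma^0\leftrightarrow\pt\times S^1$, the disjointness of $\nu(S)$ and $\nu(T)$ yields $\Phi_{G(\gamma)}\circ G=G\circ\Phi_\gamma$, where $G$ simultaneously carries $R$ to $R'$ and $\gamma$ to $G(\gamma)$. Taking $\gamma=mC_1+C_2$ and using $G(mC_1+C_2)=(m+1)C_1+C_2$ together with $G(R)=R'$, this reads
\[
(W,F')=\Phi_{(m+1)C_1+C_2}(S^4,R')=\Phi_{(m+1)C_1+C_2}\bigl(G(S^4,R)\bigr)=G\bigl(\Phi_{mC_1+C_2}(S^4,R)\bigr).
\]
By Proposition~\ref{claim:noturn}, $\Phi_{mC_1+C_2}(S^4,R)=(S^4,\sigma^m(K))$, so $(W,F')$ is obtained from $(S^4,\sigma^m(K))$ by the Gluck twist along $S$. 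Moreover $W\cong S^4$: performing $G$ first on the unknotted sphere $S\subset S^4$ (which manifestly returns $S^4$) gives $(S^4,R')$, and then applying $\Phi_{(m+1)C_1+C_2}$ is the surgery of Proposition~\ref{prop:main} with twist parameter $m+1$ and $n=0$, so $W\cong S^4$ directly.

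The remaining step, which I expect to be the main obstacle, is to recognize that the Gluck twist of $\sigma^m(K)$ along the spinning sphere $S$ is exactly Boyle's turned spun torus $\sigma^m_T(K)$; this is precisely the purpose of recording the Gluck twist description of $R'$ in the setup. Here I would unwind the turning operation of \cite{boyle}, verifying that the sphere along which Boyle turns is the spinning axis $S$ and that the normal framing conventions agree, so that Boyle's turning of $\sigma^m(K)$ coincides with $G(\sigma^m(K))$ rather than with a competing convention. Care is needed with orientation and sign, which is the source of the $\pm$ ambiguity appearing in Theorem~\ref{prop:twistturnedtoricovers}: I would fix conventions so that turning adds the curve $C_1+C_2$ (and not $C_1-C_2$), matching the Gluck twist as set up. Once this identification is made, $(W,F')\cong(S^4,\sigma^m_T(K))$ follows, completing the proof.
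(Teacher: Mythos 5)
Your proposal is correct and follows essentially the same route as the paper: the paper likewise invokes Boyle's identification of the turned $m$-twist spun torus with the Gluck twist of $\sigma^m(K)$ along the spinning axis $S$, and then commutes that Gluck twist (supported near $S$, disjoint from $\nu(T)$) past the surgery of Proposition~\ref{claim:noturn}, noting that it carries $R$ to $R'$ and $C_2$ to $C_1+C_2$ so that $(m+1)C_1+C_2$ replaces $mC_1+C_2$ in the gluing. Your write-up is somewhat more explicit about the commutation identity and the convention check against \cite{boyle}, but the underlying argument is the same.
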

\stepcounter{equation}

    \begin{proof}
    [Proof of Proposition \ref{claim:withturn}]
    The $m$-twist turned torus is precisely obtained from the $m$-twist spun torus by performing a Gluck twist on the axis of spinning. See Boyle \cite{boyle} for details. Thus performing a Gluck twist along $S$ (viewed as a sphere in $W$) transforms $(W,F)$ from Proposition \ref{claim:noturn} into a pair diffeomorphic to $(S^4,\sigma^m_T(K))$. Therefore, $(S^4,\sigma^m_T(K))$ is obtained from $(S^4, R')$ by surgery on $T$ according to the gluing map in \eqref{eq:claimnoturn}, but with $C_1+C_2$ taking the place of $C_2$, which is precisely the claim. 
    \end{proof}

Now we construct $\Sigma_2(\sigma^m(K))$. Using Proposition \ref{claim:noturn}, this 4-manifold can be obtained by first taking a 2-fold branched cover of $S^4$ with branch set $R$ and then performing a lifted surgery to the lift of $T$. Since $R$ is an unknotted torus and $T$ bounds a solid torus intersecting $R$ in a core curve, the lift $\widetilde{T}$ is an unknotted torus in $\Sigma_2(R)\cong S^2\times S^2$. The curve $C_1$ bounds a framed disk into the complement of $T$ that intersects $R$ once, so the lift of $C_1$ is a single curve $\widetilde{C}_1$ bounding a framed disk into the complement of $\widetilde{T}$. The curve $C_2$ bounds a framed disk into the complement of $T$ that does not intersect $R$, so the lift of $C_2$ is two curves (each of which we refer to as $\widetilde{C}_2$) that each bound framed disks into the complement of $\widetilde{T}$. Let $Y$ denote the 2-fold cyclic cover of $S^3\setminus\nu(K)$ and $\phi:Y\to Y$ the covering involution. If $m$ is odd, let $\pt\, \widetilde{\times}\, S^1$ denote the lift  of $\pt \times S^1$ to $Y\times_{\phi^m}S^1$, where $\pt$ is a point in $\partial(S^3 \backslash \nu{K})$. If $m$ is even, let $\pt\, \widetilde{\times} \, S^1$ denote one component of the 2-component lift  of $\pt \times S^1$ to $Y\times_{\phi^m}S^1$. Then $\Sigma_2(\sigma^m(K))$ is obtained from $S^2\times S^2$ by removing a tubular neighborhood of $\widetilde{T}$ and regluing $Y\times_{\phi^m} S^1$ with the following gluing map lifted from \eqref{eq:claimnoturn}.
\begin{align*}
    \mu(\widetilde{T})&\leftrightarrow\widetilde{\lambda}(K)\times 0\\
    \widetilde{C}_1^0&\leftrightarrow\widetilde{\mu}(K)\times 0\\
    \begin{cases} (m\widetilde{C}_1+2\widetilde{C}_2)^0&\text{if $m$ is odd}\\(\frac{m}{2}\widetilde{C}_1+\widetilde{C}_2)^0&\text{if $m$ is even}\end{cases}&\leftrightarrow\pt\widetilde{\times}S^1.
\end{align*}

We conclude from \eqref{eq:covergluing} that $\Sigma_2(\sigma^m(K))\cong (S^2\times S^2)\#\Sigma_2(\tau^m\rho^0(K))$ as claimed. (Here we implicitly use the well-known fact that given any pair of dual curves on an unknotted torus in a 4-ball that each bound framed disks into the complement of the torus, there is an ambient diffeomorphism fixing $\mathcal{T}$ setwise and taking the curves to $\widetilde{C}_1,\widetilde{C}_2$; this is an easy consequence of Claim \ref{claim:ins4}.)

Now we construct $\Sigma_2(\sigma^m_T(K))$. Using Proposition \ref{claim:withturn}, this 4-manifold can be obtained by first taking a 2-fold branched cover of $S^4$ with branching set $R'$ and then performing a lifted surgery to the lift of $T$. Again, since $T$ bounds a solid torus intersecting $R'$ in a core curve, $T$ lifts to an unknotted torus, which we again call $\widetilde{T}$. The curve $C_1$ again bounds a framed disk into the complement of $T$ intersecting $R'$ in one point, so $C_1$ lifts to one curve $\widetilde{C}_1$ bounding a framed disk into the complement of $\widetilde{T}$. The curve $C_2$ also bounds a disk into the complement of $T$ intersecting $R'$ in a single point, so $C_2$ lifts to one curve $\widetilde{C}_2$ intersecting $\widetilde{C}_1$ in two points. Since $C_1+C_2$ bounds an {\emph{unframed}} (mod 2) disk into the complement of $T$ disjoint from $R$, the resolution $\widetilde{C}_1+\widetilde{C}_2$ consists of two parallel curves that each do not bound framed disks into the complement of $\widetilde{T}$. (That is, two parallel Rokhlin invariant one curves.) We conclude there is a dual curve $\widetilde{C}_3$ to $\widetilde{C}_1$ on $\widetilde{T}$ bounding a framed disk into the complement of $\widetilde{T}$ so that $\widetilde{C}_2=\widetilde{C}_1+2\widetilde{C}_3$. Then $(m+1)C_1+C_2$ lifts to $(m+2)\widetilde{C}_1+2\widetilde{C}_3$. If $m$ is odd, let $\pt\, \widetilde{\times}\, S^1$ denote the lift  of $\pt \times S^1$ to $Y\times_{\phi^m}S^1$, where $\pt$ is a point in $\partial(S^3 \backslash \nu{K})$. If $m$ is even, let $\pt\, \widetilde{\times} \, S^1$ denote one component of the 2-component lift  of $\pt \times S^1$ to $Y\times_{\phi^m}S^1$. Then $\Sigma_2(\sigma^m_T(K))$ is obtained from $S^2\times S^2$ by removing a tubular neighborhood of $\widetilde{T}$ and regluing $Y\times_{\phi^m} S^1$ with the following gluing map lifted from \eqref{eq:claimwithturn}.
\begin{align*}
    \mu(\widetilde{T})&\leftrightarrow\widetilde{\lambda}(K)\times 0\\
    \widetilde{C}_1^0&\leftrightarrow\widetilde{\mu}(K)\times 0\\
    \begin{cases} ((m+2)\widetilde{C}_1+2\widetilde{C}_3)^0&\text{if $m$ is odd}\\(\frac{m+2}{2}\widetilde{C}_1+\widetilde{C}_3)^0&\text{if $m$ is even}\end{cases}&\leftrightarrow\pt\widetilde{\times}S^1.
\end{align*}

Then from \eqref{eq:covergluing} we conclude that $\Sigma_2(\sigma^m_T(K))\cong (S^2\times S^2)\#\Sigma_2(\tau^{m+2}\rho^0(K))$ (and hence to $(S^2\times S^2)\#\Sigma_2(\tau^{m-2}\rho^0(K))$ according to Theorem \ref{thm:main}) as claimed.
\end{proof}

\begin{remark}
Note that if we let $\Sigma_n (F)$ denote the cyclic $d$-fold cover of $S^4$ branched along the closed surface $F \subseteq S^4$, then for any classical knot $K$ and integer $m$ coprime to $d$, the above arguments can be generalized to show that $\Sigma_d (\sigma^m_T(K)) \cong \Sigma_d (T) $, where $T$ is the unknotted torus in $S^4$. In particular, $\Sigma_d(\sigma^1_T(K))\cong\#_{d-1}S^2\times S^2$ for any $d$.

Specifically, keeping the notation for $R',T,C_1,C_2$ the same, when we take the degree-$d$ cyclic cover of $R'$ the surface $T$ again lifts to an unknotted torus $\widetilde{T}$ in $\Sigma_d(R')\cong\#_{d-1}S^2\times S^2$. This is a simply connected 4-manifold, so we may isotope $\widetilde{T}$ to sit inside of an $S^4$ summand. The curve $C_1$ lifts again to one Rokhlin invariant zero curve $\widetilde{C}_1$. The curve $C_1+C_2$ lifts to $d$ Rokhlin invariant 1 curves each dual to $\widetilde{C}_1$, so the lift $\widetilde{C}_2$ is given by $(d-1)\widetilde{C}_1+d\widetilde{C}_3$, where $\widetilde{C}_3$ (as before) is a Rokhlin invariant zero curve dual to $\widetilde{C}_1$.

In the surgery description of $\Sigma_d (\sigma^m_T(K))$ obtained from lifting \eqref{claim:withturn}, we glue $\pt\widetilde{\times} S^1$ to $((m+1)\widetilde{C}_1+\widetilde{C}_2)^0=((m+d)\widetilde{C}_1+m\widetilde{C}_3)^0$. 

Compare this to the proof of Theorem \ref{thm:main}. To avoid repeated names, let $A$ denote $C_1$ and $B$ denote $C_2$ as in the proof of Theorem \ref{thm:main}. The curve $(m+d)\widetilde{C}_1+m\widetilde{C}_3$ is the lift of $mA+B$ via a $d$-fold cover, with $\tilde{C}_1$ the entire lift of $A$ and $\tilde{C}_3$ one component (out of $d$) of the lift of $B$. Then the result of surgering $S$ along $\widetilde{T}$ is diffeomorphic to the $d$-fold cyclic branched cover of $\tau^m\rho^0(K))$, which is $S^4$ \cite{goldsmithkauffman:twist}.
\end{remark}

Theorem \ref{prop:twistturnedtoricovers} yields the following application, whose proof was given in the introduction.

\begin{theorem:twistturnedtoruscover}
For any classical knot, $\Sigma_2(\sigma^{\pm 1}_T(K))\cong S^2\times S^2$.
\end{theorem:twistturnedtoruscover}

Recall that by Juh\'asz--Powell, $\sigma^{\pm1}_T(K)$ is topologically isotopic to an unknotted torus and hence has double branched cover homeomorphic to $S^2\times S^2$. In fact, they prove that $\Sigma_2(\sigma^{1}_T(K))$ is diffeomorphic to $(S^ 2\times S^2) \# X$, for some homotopy 4-sphere $X$. Theorem \ref{theorem:twistturnedtoruscover} shows that $(S^ 2\times S^2) \# X \cong S^ 2\times S^2$.   Proposition~\ref{prop:twistturnedtoruscover} strengthens this, by showing that $X$ is in fact diffeomorphic to $S^4$.

\begin{proposition:twistturnedtoruscover}
    The homotopy 4-sphere $X$ (depending on a classical knot $K$) constructed by Juh\'asz--Powell  \cite[Proposition 8.1]{juhaszpowell} is diffeomorphic to $S^4$.
\end{proposition:twistturnedtoruscover}

\begin{figure}
    \centering
    \includegraphics[width=0.95\linewidth]{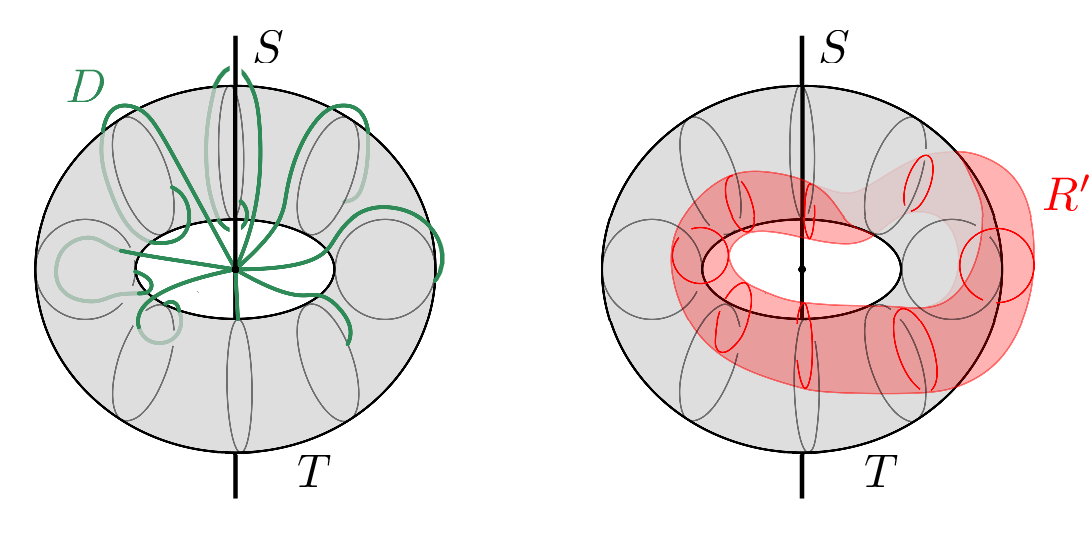}
    \caption{\emph{Left:} The image of the disk $D$ from \cite{juhaszpowell} under a Gluck twist along $S$. Here we have identified the resulting copy of $S^4$ with the diagram in such a way that $T$ corresponds with its image under the Gluck twist. \emph{Right:} The image of the torus $R$ under the same Gluck twist, which we denote by $R'$.  }
    \label{fig:enter-label}
\end{figure}

\begin{proof}

    We will follow the notation of \cite[Proposition 8.1]{juhaszpowell}.
    Specifically, consider the disk $D$ in Figure \ref{fig:enter-label}. This disk has boundary on $R'$ and interior disjoint from $R'\cup T$. The boundary of $D$ is characterized by being the unique osotopy class of curve on $R'$ that links $S$ once and links $T$ zero times. In $S^4\setminus\nu(T)$, $\partial D$ is isotopic to $(C_1+C_2)^0$. The interior of $D$ intersects the 2-sphere $S$ transversely once. 
   In the 2-fold cover $S^2\times S^2$ of $S^4$ with branch set $R'$, the disk $D$ is covered by a 2-sphere $\widetilde{S}'$. Let $\widetilde{S}$ denote one of the two lifts of $S$. These 2-spheres $\widetilde{S},\widetilde{S}'$ intersect transversely once; $\widetilde{S}$ has trivial normal bundle while $\widetilde{S}'$ has Euler number $\pm2$. In $S^2\times S^2$ (up to choice of coordinates), $\widetilde{S}=S^2\times pt$ while $\widetilde{S}'$ is the resolution of $S^2\times\pt$ and $\pt\times S^2$. The complement $B=S^2\times S^2\setminus(\widetilde{S}\sqcup\widetilde{S}')$ is a 4-ball. Juh\'asz--Powell obtain the homotopy 4-sphere $X$ by capping off the homotopy 4-ball obtained from $B$ by performing the surgery of \eqref{eq:claimwithturn} along $\widetilde{T}\subset B$. (Although in their paper the order of operations is reversed; they first implicitly perform the surgery along $T$ in order to transform $R$ into $\sigma^1_T(K)$, and then take the double branched cover and delete $\widetilde{S}\cup\widetilde{S}'$. We have rephrased this construction using Proposition \ref{claim:withturn}.) But since $\widetilde{T}$ bounds a solid torus in $B$ (namely, the lift of the solid torus that $T$ bounds that intersects $R'$ in a curve parallel to $\partial D$), the torus $\widetilde{T}$ is still unknotted in $B$. In $B$, the lift $\widetilde{C}_1$ of $C_1$ is one curve with Rokhlin invariant zero; each of two lifts of $C_2$ is a Rokhlin invariant zero curve dual to $\widetilde{C}_1$. We thus conclude from Theorem \ref{thm:main} that $X\cong\Sigma(\tau^1\rho^0(K))$, and since $\tau^1\rho^0(K)$ is unknotted we obtain $X\cong S^4$.  
   
\end{proof}

\begin{remark}
  The key observation of Proposition \ref{prop:twistturnedtoruscover} is that not only is $\widetilde{T}$ unknotted, but $\widetilde{T}$ is unknotted even in the complement of the two spheres $\widetilde{S},\widetilde{S'}$. The homotopy sphere $X$ constructed in \cite{juhaszpowell} is obtained by deleting these spheres from $S^2\times S^2$, so this fact lets us apply our results in this remaining 4-ball.
\end{remark}

\bibliographystyle{alpha}
\bibliography{biblio}
\end{document}